\newcommand\skewperp{\angle}
\renewcommand{\t}{\mathfrak{t}}
\newcommand\im{\operatorname{im}}
\renewcommand\Re{\operatorname{Re}}
\renewcommand\Im{\operatorname{Im}}
\newcommand\Aut{\mathcal Aut}
\renewcommand\H{\mathbb{H}}
\newcommand\C{\mathbb C}
\newcommand\X{\mathfrak X}
\newcommand\D{\mathcal D}
\newcommand\R{\mathbb R}
\newcommand\Z{\mathbb Z}
\newcommand\Y{\mathcal Y}
\newcommand\XX{\mathbb X}
\newcommand\Oo{\mathcal O}
\newcommand{\Pic}{\operatorname{Pic}}
\newcommand\dquo{/\!/\!/}
\newtheorem{Thm}{Theorem}[section]
\newtheorem{Prop}[Thm]{Proposition}
\newtheorem{Cor}[Thm]{Corollary}
\newtheorem{Lem}[Thm]{Lemma}
\theoremstyle{definition}
\newtheorem{Ex}[Thm]{Example}
\newtheorem{defi}[Thm]{Definition}
\newtheorem{Rem}[Thm]{Remark}
\newtheorem{Prob}[Thm]{Problem}
\numberwithin{equation}{section}
\numberwithin{table}{section} \oddsidemargin=0cm
\author{Ivan V. Losev}
\title{Classification of multiplicity free
Hamiltonian actions of  complex tori on
Stein manifolds}
\thanks{{\it Key words and phrases}: complex tori, multiplicity free
Hamiltonian actions, Stein manifolds, moment maps} \thanks{{\it 2000 Mathematics Subject
Classification.}  53D20 (Primary), 32E10, 32M05 (Secondary)}
\begin{document}
\begin{abstract}
A Hamiltonian action of a complex torus on a symplectic complex
manifold is said to be {\it multiplicity free} if a general orbit
is a lagrangian submanifold. To any multiplicity free Hamiltonian
action of a complex torus  $T\cong (\C^\times)^n$ on a Stein
manifold $X$ we assign a certain 5-tuple consisting of a Stein
manifold $Y$, an \'{e}tale map $Y\rightarrow \t^*$, a set of
divisors on $Y$ and elements of $H^2(Y,\Z)^{\oplus n}, H^2(Y,\C)$.
We show that $X$ is uniquely determined by this invariants.
Furthermore, we describe all 5-tuples arising in this way.
\end{abstract}
\maketitle \markright{HAMILTONIAN ACTIONS OF COMPLEX
TORI}
\tableofcontents
\section{Introduction}
Let $X$ be a  smooth manifold with a symplectic form $\omega$ and
$T$ be a compact torus  acting on $X$ by symplectomorphisms. We
recall that the action $T:X$ is called {\it Hamiltonian} if there
are $n=\dim T$ functions $H_1,\ldots, H_n\in C^\infty(X)$ such
that
\begin{itemize}
\item[(H1)] $\{H_i,H_j\}=0$, where $\{\cdot,\cdot\}$ denotes the
Poisson  bracket on $X$ induced by $\omega$.
\item[(H2)] The skew-gradient of $H_i$ coincides with the velocity
vector field of $\xi_i$, where $\xi_1,\ldots,\xi_n$ is a basis of
the Lie algebra $\t$ of $T$.
\end{itemize}

It follows from (H1),(H2) that all functions $H_i$ are
$T$-invariant.

To a Hamiltonian action $T:X$ one assigns a map $\mu:X\rightarrow
\t^*, \langle\mu(x),\xi_i\rangle=H_i(x),$ called the {\it moment
map}. This map is $T$-equivariant and satisfies the following
identity $$\langle d_x\mu(v),\xi\rangle=\omega(\xi_*x,v), \forall
x\in X,v\in T_xX, \xi\in\t,$$ where $\xi_*x$ denotes the velocity
vector in $x$ induced by $\xi$. Note that the moment map is
defined uniquely up to the addition of a scalar. The manifold $X$
(or, more precisely, the quadruple $(X,\omega,T:X,\mu)$ is called
a {\it Hamiltonian $T$-manifold}.

We remark that, by (H1),(H2), any $T$-orbit in $X$ is an isotropic
submanifold. When a general orbit is lagrangian, the Hamiltonian
$T$-manifold $X$ is called {\it multiplicity free} or, shortly,
{\it MF} (one also uses the term {\it symplectic toric manifold}).

The most crucial result concerning {\it compact} MF  Hamiltonian
$T$-manifolds is their classification due to Delzant,
\cite{Delzant1}. If $T_0$ is the inefficiency kernel of the action
$T:X$ (that is, the set of all elements of $T$ acting trivially on
$X$), then the action $T/T_0:X$ is Hamiltonian with moment map
$p\circ\mu$. Here $\mu$ is the moment map for the action $T:X$ and
$p$ is the natural projection $\t^*\rightarrow (\t/\t_0)^*$. So it
is enough to deal with effective actions. By the famous result of
Atyah, \cite{Atyah}, and Guillemin-Sternberg, \cite{GS}, the image
$\mu(X)$ is a convex polytope in $\t^*$ (provided $X$ is compact).
This image is called the moment polytope of $X$. Delzant proved that
a compact MF Hamiltonian $T$-manifold is uniquely determined by its
moment polytope. He also found a necessary and sufficient condition
for a convex polytope in $\t^*$ to be a moment polytope of a compact
MF Hamiltonian manifold.

The goal of the present paper is to solve a similar classificational
problem for {\it complex} tori actions on complex symplectic
manifolds (with holomorphic symplectic form). The definition of a
(MF) Hamiltonian action is generalized to the complex case directly
(the functions $H_i$ are now taken from the algebra $\Oo(X)$ of
holomorphic functions). Let $T$ denote a complex torus
$(\C^\times)^n$ and $X$ a complex Hamiltonian $T$-manifold. Since
the moment map $\mu$ is holomorphic, the manifold $X$ is not compact
unless the action $T:X$ is trivial. In this paper we concentrate on
the case when $X$ is a Stein manifold.

The data  we use to classify  MF Hamiltonian Stein $T$-manifolds
are  more complicated than in the Delzant case. Below $X$ is a MF
Stein Hamiltonian $T$-manifold.

By the results of \cite{Snow}, there is a categorical quotient $Y$
for the action $T:X$, which is a Stein space (see Section
\ref{SECTION_prelim}  for details). Let $\pi:X\rightarrow Y$ be the
quotient map. It turns out that in our case $Y$ is even a Stein
manifold (Lemma \ref{Lem:1.1}).  Let $\psi$ be a unique holomorphic
map $Y\rightarrow \t^*$ such that $\mu=\psi\circ\pi$. One can show
that $\psi$ is \'{e}tale(=a local isomorphism), Lemma \ref{Lem:1.1}.
The manifold $Y$ and the map $\psi$ are the first two pieces of data
we need.

For a one-dimensional subgroup $T_0\subset T$ we denote by
$\widetilde{\D}(T_0)$ the set of all connected components $X_0$ of
the
fixed-point submanifold $X^{T_0}$ such that there is $x\in X_0$ with $T_x=T_0$. 
Set $\D(T_0):=\{\pi(X_0), X_0\in \widetilde{\D}(T_0)\}$. We will see
below (assertion 2 of Lemma \ref{Lem:1.1}) that any element of
$\D(T_0)$ is a connected component of
$\psi^{-1}(\alpha+\t_0^\perp)$, where $\alpha\in\t^*$ and
$\t_0^\perp$ denotes the annihilator of $\t_0$ in $\t^*$. Set
$\D:=\cup_{T_0}\D(T_0)$. The set of divisors $\D$ is the third piece
of data we need.

The fourth piece of data is a system of certain elements of
$H^2(Y,\Z)$.  To a character $\chi$ of $T$ we assign the coherent
sheaf $\Oo_{\chi}$ on $Y$ defined by $$\Oo_\chi(U)=\{f\in
\Oo(\pi^{-1}(U))| t.f=\chi(t)f, \forall t\in T\}.$$ We will see
(assertion 3 of Lemma \ref{Lem:1.1}) that $\Oo_\chi$ is a line
bundle for any $\chi$. Let us fix a basis $\chi_1,\ldots,\chi_n$ in
the character group $\X(T)$ of $T$. Since $Y$ is Stein, we see that
the groups $\Pic(Y)$ and $H^2(Y,\Z)$ are naturally isomorphic. So we
get $n$ cohomology classes $c_i:=[\Oo_{\chi_i}]\in H^2(Y,\Z)$. We
remark that $\chi\mapsto [\Oo_\chi]$ is not a group homomorphism
despite for any characters $\chi_1,\chi_2$ there is a natural map
$\Oo_{\chi_1}\otimes\Oo_{\chi_2}\rightarrow \Oo_{\chi_1+\chi_2}$.

Finally, we will see (assertion 4 of Lemma \ref{Lem:1.1}) that
there is  a holomorphic 2-form $\omega_0$ on $Y$ such that
$\omega=\pi^*(\omega_0)-d\alpha$, where $\alpha$ is a
$T$-invariant holomorphic 1-form on $X$ satisfying
$\alpha(\xi_*)=H_\xi$ for any $\xi\in \t$; here and below
$H_\xi=\langle\mu,\xi\rangle$. Moreover, the class $c_0$ of
$\omega_0$ in the second DeRham cohomology group $H^2_{DR}(Y)$ is
well-defined. Since $Y$ is Stein, we know that $H^2_{DR}(Y)$ is
naturally isomorphic to $H^2(Y,\C)$, see \cite{Onishchik}, Theorem
4.16.

So to any  MF Hamiltonian Stein $T$-manifold $X$ we have assigned
the 5-tuple $\Y_X:=(Y,\psi,\D,(c_i)_{i=1}^n,c_0)$. Now let
$\Y:=(Y,\psi,\D, (c_i)_{i=1}^n,c_0),\Y':=(Y',\psi',\D',
(c_i')_{i=1}^n,c_0')$ be two 5-tuples of the indicated form. By a
morphism between $\Y,\Y'$ we mean an \'{e}tale holomorphic map
$\varphi:Y\rightarrow Y'$ satisfying the following conditions:
\begin{itemize}
\item[(a)] $\psi=\psi'\circ\varphi$.
\item[(b)] $\D$ is the set of connected components of $\bigcup_{Y_0'\in
\D'}\varphi^{-1}(Y_0)$.
\item[(c)] $c_i:=\varphi^*(c_i'), i=\overline{0,n}$.
\end{itemize}

Let $X,X'$ be MF Hamiltonian Stein $T$-manifolds (recall that $T$
is assumed to act effectively on both $X,X'$) and
$\omega,\omega',\mu,\mu'$ be the corresponding symplectic forms
and the moment maps. By a {\it Hamiltonian morphism} between $X$
and $X'$ we mean a holomorphic $T$-equivariant map $f:X\rightarrow
X'$ such that $f^*(\omega')=\omega, \mu=\mu'\circ f$. Since $\psi,\psi'$ are
both \'{e}tale, we easily see that a Hamiltonian morphism
$f:X\rightarrow X'$ gives rise to a unique morphism
$\Y_f:\Y_{X}\rightarrow \Y_{X'}$ such that $\Y_f\circ\pi=\pi'\circ
\Y_f$, where $\pi,\pi'$ denote the quotient maps for $X,X'$.

\begin{Thm}\label{Thm:1}
Let $X,X'$ be such as above. For any morphism
$\varphi:\Y_X\rightarrow \Y_{X'}$ there is a Hamiltonian morphism
$f:X\rightarrow X'$ with $\Y_f=\varphi$. In particular, $X,X'$ are
isomorphic if  $\Y_X,\Y_{X'}$ are.
\end{Thm}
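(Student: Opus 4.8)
The plan is to reduce, by a fibre-product construction, to the case in which $\Y_X=\Y_{X'}$ and $\varphi=\id$, and then to recover a Hamiltonian isomorphism by gluing local normal forms. Given a morphism $\varphi\colon\Y_X\to\Y_{X'}$, put $\widetilde X:=X'\times_{Y'}Y$, the fibre product of $\pi'\colon X'\to Y'$ and $\varphi\colon Y\to Y'$, with projections $p\colon\widetilde X\to X'$ and $\pi_{\widetilde X}\colon\widetilde X\to Y$. Since $\varphi$ is \'etale, so is $p$, whence $\widetilde X$ is a complex manifold of dimension $2n$; it is Stein, being closed in the Stein manifold $X'\times Y$. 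The form $p^*\omega'$ is holomorphic symplectic (nondegeneracy is local and $p$ is a local isomorphism), the $T$-action on the $X'$-factor makes $\widetilde X$ a Hamiltonian $T$-manifold with moment map $\mu'\circ p$, a dimension count together with the isotropy of $T$-orbits shows it is MF, and $Y$ is its categorical quotient (by \'etale base change from $X'\quo T=Y'$) with quotient map $\pi_{\widetilde X}$ and induced \'etale map $\psi'\circ\varphi$. Using conditions (a)--(c) one checks that $\Y_{\widetilde X}=\Y_X$ and that $p$ induces $\varphi$ on the invariants. Hence it suffices to show: whenever $\Y_X=\Y_{X'}=:\Y$ there is a Hamiltonian $T$-isomorphism $f\colon X\to X'$ over $Y$ (i.e.\ $\pi'\circ f=\pi$); then $p\circ f$ handles an arbitrary $\varphi$, and applying the result to $\varphi$ and $\varphi^{-1}$ yields the last assertion, since a Hamiltonian endomorphism of $X$ inducing $\id$ on $\Y_X$ is a self-map over $Y$, \'etale because it preserves $\omega$, restricting to a principal-bundle automorphism over the free locus, hence invertible.

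The crucial input is a local normal form. Cover $Y$ by open sets $U$, each biholomorphic to a ball and small enough that $\psi|_U$ is an isomorphism onto an open subset of $\t^*$; by Lemma \ref{Lem:1.1} every divisor of $\D$ meeting $U$ is a component of $\psi^{-1}(\alpha+\t_0^\perp)$, so the degenerating subtorus $T_0$ is read off from the divisor and $\psi$. Then $\pi^{-1}(U)$ is isomorphic, $T$-equivariantly and symplectically over $U$, to an explicit model $M_U$ depending only on $\psi|_U$ and on the configuration of these divisors. This is proved by a slice argument: over the free locus $\pi$ is a principal $T$-bundle, trivial over $U$ since $U$ is contractible; near a point with stabiliser a $k$-dimensional subtorus $S$ one compares with the standard MF action of $S\cong(\C^\times)^k$ on $\C^{2k}$, with moment map $(a_i,b_i)\mapsto(a_ib_i)_i$, and the vanishing of $H^2(U,\Z)$ and $H^2_{DR}(U)$ removes the residual freedom in the line bundles and in the symplectic form. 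The second local fact needed is that every $T$-equivariant automorphism of $M_U$ over $U$ is a gauge transformation $g_h$ for a unique $h\in\mathrm{Map}(U,T)$: this is the standard description of automorphisms of a principal $T$-bundle over the free locus, and $g_h$ extends across the divisors because its defining formula does.

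Now assume $\Y_X=\Y_{X'}=\Y$, cover $Y$ by such $U_j$, and choose $T$-equivariant symplectic isomorphisms $\pi^{-1}(U_j)\cong M_{U_j}\cong\pi'^{-1}(U_j)$ over $U_j$ (legitimate since $X$ and $X'$ share $(Y,\psi,\D)$). The transition data $g_{jk},g'_{jk}\in\mathrm{Map}(U_{jk},T)$ are $1$-cocycles for the sheaf $\underline{\mathrm{Map}}(-,T)\cong(\Oo_Y^\times)^{\oplus n}$, and $X$, resp.\ $X'$, is recovered as the gluing of the $M_{U_j}$ along them. Fixing a basis $\chi_1,\dots,\chi_n$ of $\X(T)$, the functions $\chi_i(g_{jk})$ are the transition functions of the line bundle $\Oo_{\chi_i}$ on $X$, so $c_i\in\Pic(Y)=H^1(Y,\Oo_Y^\times)=H^2(Y,\Z)$ is the image of the class $[(g_{jk})]\in H^1(Y,\underline{\mathrm{Map}}(-,T))$ under the map induced by $\chi_i$; as the $\chi_i$ form a basis, $[(g_{jk})]$ is conversely determined by $(c_1,\dots,c_n)$. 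Hence $c_i=c'_i$ for all $i$ forces $[(g_{jk})]=[(g'_{jk})]$, giving a global $T$-equivariant isomorphism $\Phi\colon X\to X'$ over $Y$. Put $\omega'':=\Phi^*\omega'$; this is a symplectic form making $(X,\pi)$ MF Hamiltonian with moment map $\mu=\psi\circ\pi$ and with class $c_0$. Writing $\omega=\pi^*\omega_0-d\alpha$ and $\omega''=\pi^*\omega_0''-d\alpha''$ as in Lemma \ref{Lem:1.1}, the $2$-form $d(\alpha-\alpha'')$ is closed, $T$-invariant and annihilates every $\xi_*$, hence is the pullback of a closed $2$-form on $Y$ (first off the divisors, then on all of $Y$ since $\D$ has codimension one); thus $\omega-\omega''=\pi^*\eta$ with $\eta$ closed and $[\eta]=c_0-c_0=0$ in $H^2_{DR}(Y)$. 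As $Y$ is Stein, $\eta=d\beta$ with $\beta$ a holomorphic $1$-form; writing $\beta=\sum_i b_i\,d\psi_i$ in the coframe $\{d\psi_i\}$ (a coframe because $\psi$ is \'etale) and setting $h:=(\exp b_i)_i\in\mathrm{Map}(Y,T)$, one computes $g_h^*\omega''-\omega''=-\pi^*d\langle\psi,h^{-1}dh\rangle=-\pi^*\sum_i d\psi_i\wedge db_i=\pi^*\eta=\omega-\omega''$, while $g_h$ fixes $\mu$. Therefore $f:=\Phi\circ g_h\colon X\to X'$ is a Hamiltonian isomorphism over $Y$, as desired.

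I expect the main obstacle to be the local normal form of the second paragraph, together with the accompanying description of fibrewise automorphisms as gauge transformations; once these are secured, the global part is soft, relying only on Cartan's Theorem B for the Stein manifold $Y$ (applied to $\Oo_Y$ and $\Oo_Y^\times$), on the identification $\Pic(Y)=H^2(Y,\Z)$, and on the solvability of $\sum_i d\psi_i\wedge db_i=-d\beta$ granted by the coframe property of $\{d\psi_i\}$. One must also attend throughout to the behaviour along the divisors of $\D$ — extending forms and gauge transformations across a codimension-one locus — but this is routine once the explicit models are available.
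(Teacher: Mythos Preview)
Your reduction via the fibre product $Y\times_{Y'}X'$ is exactly what the paper does, and your invocation of the symplectic slice theorem for the local normal form is the same input (Theorem~\ref{Thm:1.1}, Remark~\ref{Rem:1.2}). Where you diverge is in the global gluing step. The paper works throughout with the sheaf $\Aut$ of \emph{Hamiltonian} automorphisms over $Y$, computes it explicitly as $\Oo_Y/(\C\oplus\X(T)^*)$ (Lemmas~\ref{Lem:2.1}--\ref{Lem:2.3}), and then shows that $H^1(Y,\Aut)\cong H^2(Y,\C\oplus\X(T)^*)$ acts simply transitively on $\XX$, with Lemmas~\ref{Lem:3.1} and~\ref{Lem:3.2} identifying how the invariants $(c_i)_{i=0}^n$ change. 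You instead enlarge to the sheaf $T_Y=\underline{\mathrm{Map}}(-,T)$ of \emph{all} gauge transformations: you first match the cocycles in $H^1(Y,T_Y)\cong H^2(Y,\X(T)^*)$ using $(c_1,\dots,c_n)$ to obtain a merely $T$-equivariant isomorphism $\Phi$, and then correct the symplectic form by an explicit global gauge transformation built from a primitive of $\omega-\Phi^*\omega'$, using $c_0=c_0'$. Your approach avoids the explicit determination of $\Aut$ and is arguably more hands-on; the paper's approach is conceptually cleaner (it isolates precisely the automorphism sheaf governing the moduli problem) and reuses $\Aut$ in the existence proof. Both rest on the same cohomological vanishing for the Stein manifold $Y$.

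Two points you should tighten. First, your ``second local fact'' that every $T$-equivariant automorphism of $M_U$ over $U$ is a gauge transformation is true, but it does require a short argument beyond the principal-bundle case: one expands a $T$-semiinvariant coordinate in weight components to see that the automorphism is multiplication by an invertible invariant function in each coordinate (your appeal to ``the defining formula extends'' goes in the wrong direction --- you need that every automorphism \emph{is} of this form, not that such formulas extend). Second, your assertion that $\chi_i(g_{jk})$ are transition functions for $\Oo_{\chi_i}$ is correct, but it relies on the model trivialisations of $\Oo_{\chi_i}$ from Example~\ref{Ex:1} being carried along by your chosen isomorphisms $\pi^{-1}(U_j)\cong M_{U_j}$; this deserves a sentence. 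With these clarifications your argument is complete and correct.
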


We note that such a morphism $f$ is not necessarily unique. For
example, shifting $f$ by an element of $T$, we again get a
Hamiltonian morphism with desired properties.

Our next task is to give a characterization of  5-tuples $\Y$ of the
form $\Y_X$. We say that a 5-tuple $\Y$ is {\it Delzant} if it
enjoys the following property:
\begin{itemize}
\item[(Del)] Let $y$ be an arbitrary point of $Y$, $Y_0^1,\ldots, Y_0^k$
be all elements of $\D$ containing $y$ and $T_0^1,\ldots, T_0^k$ be
the corresponding connected one-dimensional subgroups of $T$. Then
there is a neighborhood $U$ of $y$ in $Y$ such that any element of
$\D$ intersecting $U$ is one of $Y_0^i,i=\overline{1,k}$.
Furthermore, the primitive elements of $\X(T_0^i)^*\hookrightarrow
\X(T)^*, i=\overline{1,k},$ (each of them is defined uniquely up to
the multiplication by $\pm 1$) constitute a part of a basis in the
lattice $\X(T)^*$.
\end{itemize}

\begin{Thm}\label{Thm:2}
Suppose a 5-tuple $\Y$ is  Delzant. Then there is a MF Hamiltonian
Stein $T$-manifold $X$ such that $\Y=\Y_X$.
\end{Thm}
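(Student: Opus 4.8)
The plan is to carry out a holomorphic, $Y$-relative analogue of Delzant's construction and then glue the local pieces: the models are built over a suitable cover of $Y$, the classes $c_i$ (the isomorphism types of the bundles $\Oo_{\chi_i}$) supply the gluing data for the underlying holomorphic $T$-manifold, and $c_0$, together with the vanishing of coherent cohomology on the Stein manifold $Y$, is used to assemble the symplectic form. As in the compact case treated in \cite{Delzant1}, the condition (Del) will be exactly what guarantees that the local toric models are smooth and carry the prescribed invariants; the technical heart will be the cocycle verification for the gluing maps.

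First I would fix a cover of $Y$ by open sets $U_\alpha$, each a connected component of $\psi^{-1}(B_\alpha)$ for a small ball $B_\alpha\subset\t^*$, chosen so that $\psi$ restricts to a biholomorphism of $U_\alpha$ onto $B_\alpha$, so that $U_\alpha$ lies inside a (Del)-neighborhood of each of its points, and so that $H^2(U_\alpha,\Z)=H^2(U_\alpha,\C)=0$ (automatic since $B_\alpha$ is contractible). Let $Y_0^1,\dots,Y_0^k$ be the members of $\D$ meeting $U_\alpha$, with one-dimensional subgroups $T_0^1,\dots,T_0^k$; by (Del) their primitive cocharacters $\gamma_1,\dots,\gamma_k\in\X(T)^*$ extend to a $\Z$-basis of $\X(T)^*$, and dualizing such an extension identifies $T\cong(\C^\times)^n$ with $T_0^j$ the $j$-th coordinate subgroup. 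I would then set $X_\alpha$ equal to the preimage of $B_\alpha$ under the moment map of the product Hamiltonian $T$-manifold $\prod_{j=1}^{k}T^*\C\times\prod_{j=k+1}^{n}T^*\C^\times$, where the $j$-th factor of $T$ acts on the $j$-th factor, on the factors with $j\le k$ by the weights $(1,-1)$ on $T^*\C=\C^2$, and the $j$-th component of the moment map is shifted so that its zero fibre maps to $Y_0^j$. A direct check — the one place where (Del) is indispensable, exactly as the Delzant condition guarantees smoothness of the symplectic reduction in \cite{Delzant1} — shows that $X_\alpha$ is a MF Hamiltonian manifold, that it is Stein (being a fibre product of Stein spaces over $\t^*$), that its categorical quotient is $U_\alpha$ with $\mu=\psi\circ\pi$, that its divisor data is $\{Y_0^j\cap U_\alpha\}_j$, and that $\Y_{X_\alpha}$ equals the restriction of $\Y$ to $U_\alpha$, the sheaves $\Oo_{\chi_i}$ over $X_\alpha$ being trivial with canonical monomial generators. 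I record on $X_\alpha$ the canonical $T$-invariant holomorphic $1$-form $\lambda_\alpha$ with $\lambda_\alpha(\xi_*)=H_\xi$ and its symplectic form $\omega_\alpha=-d\lambda_\alpha$.

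Next I would glue the holomorphic $T$-manifolds $X_\alpha$. On $U_{\alpha\beta}:=U_\alpha\cap U_\beta$ the two adapted bases of $\X(T)^*$ differ by a matrix $g_{\alpha\beta}\in\mathrm{GL}_n(\Z)$ carrying $\{\pm\gamma_j\}$ for $\beta$ to $\{\pm\gamma_j\}$ for $\alpha$; such a matrix induces a canonical $T$-equivariant symplectomorphism of the corresponding product models (signed permutations of the $\C$-factors being realized by maps such as $(z_j,w_j)\mapsto(c\,w_j,-c^{-1}z_j)$, with a compensating sign on $\t^*$). Composing this with the gauge transformation $x\mapsto\xi_{\alpha\beta}(\pi(x))\cdot x$, where $\xi_{\alpha\beta}\colon U_{\alpha\beta}\to T$ has $i$-th coordinate the transition cocycle $\xi^i_{\alpha\beta}\in\Oo^*(U_{\alpha\beta})$ of $\Oo_{\chi_i}$, one obtains $T$-equivariant biholomorphisms $\varphi_{\alpha\beta}\colon X_\beta|_{U_{\alpha\beta}}\to X_\alpha|_{U_{\alpha\beta}}$ lying over $\id_{U_{\alpha\beta}}$. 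The relation $g_{\alpha\beta}g_{\beta\gamma}=g_{\alpha\gamma}$, together with the cocycle identity the $\xi^i_{\alpha\beta}$ satisfy because the $\Oo_{\chi_i}$ are line bundles of class $c_i$, yields $\varphi_{\alpha\beta}\varphi_{\beta\gamma}=\varphi_{\alpha\gamma}$ on triple overlaps, so the $X_\alpha$ glue to a holomorphic $T$-manifold $X$ with a $T$-invariant map $\pi\colon X\to Y$; I set $\mu:=\psi\circ\pi$. Since $\pi$ is an affine morphism over each $U_\alpha$ and $Y$ is Stein, a standard patching of strictly plurisubharmonic exhaustions (alternatively, the theory of quotients in \cite{Snow}) shows that $X$ is Stein; moreover $\pi$ is the categorical quotient, the divisor data of $X$ is $\D$, and the class of $\Oo_{\chi_i}$ in $\Pic(Y)\cong H^2(Y,\Z)$ is $c_i$ by construction.

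Finally I would install the symplectic form. Transporting $\lambda_\alpha$ by $\varphi_{\alpha\beta}$, and using that $\lambda_\alpha,\lambda_\beta$ are $T$-invariant with value $H_\xi$ on $\xi_*$, one gets $\varphi_{\alpha\beta}^*\lambda_\alpha-\lambda_\beta=\pi^*\eta_{\alpha\beta}$ for a holomorphic $1$-form $\eta_{\alpha\beta}$ on $U_{\alpha\beta}$ (explicitly $\eta_{\alpha\beta}=\pm\sum_i\mu_i\,d\log\xi^i_{\alpha\beta}$, with $\mu_i$ the globally defined components of $\psi$). The $(\eta_{\alpha\beta})$ form a $1$-cocycle with values in the coherent sheaf $\Omega^1_Y$; since $Y$ is Stein, $H^1(Y,\Omega^1_Y)=0$, so $\eta_{\alpha\beta}=\eta_\alpha-\eta_\beta$ for suitable $\eta_\alpha\in\Omega^1(U_\alpha)$, and then the forms $\lambda_\alpha-\pi^*\eta_\alpha$ glue to a global $T$-invariant holomorphic $1$-form $\lambda$ on $X$ with $\lambda(\xi_*)=H_\xi$. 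Choosing a closed holomorphic $2$-form $\omega_0$ on $Y$ with $[\omega_0]=c_0$ in $H^2(Y,\C)\cong H^2_{DR}(Y)$ and setting $\omega:=\pi^*\omega_0-d\lambda$, one finds over each $X_\alpha$ that $\omega=\omega_\alpha+\pi^*(\omega_0+d\eta_\alpha)$; since $\omega_\alpha$ pairs the $\pi$-fibre directions nondegenerately with a complement and has no purely fibrewise or purely horizontal component, adding the pullback of a $2$-form from $Y$ preserves nondegeneracy, so $\omega$ is a holomorphic symplectic form. Thus $(X,\omega,T\colon X,\mu)$ is a MF Hamiltonian Stein $T$-manifold whose $1$-form datum is $\lambda$ and whose $2$-form datum is $\omega_0$, of class $c_0$; comparing the five invariants gives $\Y_X=\Y$. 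I expect the main obstacle to lie in the two middle steps: proving that (Del) makes each product model a smooth MF Hamiltonian manifold with exactly the prescribed divisors, stabilizers and line-bundle data, and then pushing the cocycle identity for the $\varphi_{\alpha\beta}$ through the $\mathrm{GL}_n(\Z)$-changes of adapted basis (including the sign ambiguities of the primitive cocharacters and the ensuing swaps among $T^*\C$-factors); by contrast the symplectic gluing is soft, because the relevant obstruction automatically lands in a coherent sheaf — owing to the $d\mu_i$ being globally exact — and hence dies on the Stein manifold $Y$.
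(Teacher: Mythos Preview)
Your strategy differs from the paper's in two essential ways, and the second of these hides a genuine gap.

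First, the paper does not attempt to hit the prescribed classes $(c_i)_{i=1}^n,c_0$ directly. Instead it invokes Proposition~\ref{Prop:3.3}: once \emph{some} MF Hamiltonian Stein $T$-manifold with invariants $(Y,\psi,\D,\bullet,\bullet)$ exists, the free transitive action of $H^1(Y,\Aut)\cong H^2(Y,\C\oplus\X(T)^*)$ on $\XX$ produces one with any desired $(c_i,c_0)$. This removes your entire third step (installing the symplectic form with the correct de~Rham class) and the line-bundle bookkeeping from the construction. Second, and more importantly, the paper does not try to write down explicit transition maps and verify a cocycle identity by hand. It chooses \emph{Hamiltonian} isomorphisms $\iota_{ij}$ (which exist by the symplectic slice Theorem~\ref{Thm:1.1}) and then rigidifies them using local lagrangian sections $\widetilde{Y}_i\subset X_i^\zeta$: Proposition~\ref{Prop:4.1} shows there is a \emph{unique} $\varphi_{ij}\in\Aut(Y_{ij})$ carrying $\widetilde{Y}_i$ onto $\iota_{ij}(\widetilde{Y}_j)$, and the modified maps $\widetilde{\iota}_{ij}=\varphi_{ij}^{-1}\iota_{ij}$ then automatically satisfy the cocycle condition because they are characterized by preserving the sections.

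The gap in your argument is precisely the cocycle check you yourself flag as the main obstacle. Your transition maps are $\varphi_{\alpha\beta}=\Xi_{\alpha\beta}\circ G_{\alpha\beta}$ with $\Xi_{\alpha\beta}$ the gauge by $\xi_{\alpha\beta}:U_{\alpha\beta}\to T$ and $G_{\alpha\beta}$ the ``$\mathrm{GL}_n(\Z)$-induced symplectomorphism''. The gauges commute with $T$-equivariant maps over $Y$, so the issue reduces to $G_{\alpha\beta}G_{\beta\gamma}=G_{\alpha\gamma}$. But your $G_{\alpha\beta}$ are not canonical: realizing a sign flip $\gamma_j\mapsto-\gamma_j$ by $(z_j,w_j)\mapsto(c\,w_j,-c^{-1}z_j)$ depends on $c$, and for any fixed choice this map has order~$4$ (its square is $(z_j,w_j)\mapsto(-z_j,-w_j)$), so on a triple overlap where two sign flips occur the composite $G_{\alpha\beta}G_{\beta\gamma}G_{\gamma\alpha}$ is off by a nontrivial element of $T$ that you have no freedom to absorb, since your $\Xi$'s are already pinned down by the $c_i$. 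More generally, an element of $\mathrm{GL}_n(\Z)$ does not lift canonically to a $T$-equivariant symplectomorphism between mixed products of $T^*\C$ and $T^*\C^\times$ factors, and the matrices $g_{\alpha\beta}$ alone do not determine which factors get swapped or how. The paper's lagrangian-section trick is exactly what replaces this: it singles out, among all Hamiltonian isomorphisms over $Y_{ij}$, the unique one compatible with the sections, and uniqueness forces the cocycle identity without any explicit formula.
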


Let us describe the structure of this paper. Section 2 is devoted to
some general results concerning MF Hamiltonian Stein $T$-manifolds.
Firstly, we recall results of Snow concerning general reductive
group actions on Stein manifolds. Then we establish the slice
theorem \ref{Thm:1.1}, which asserts that locally a MF Hamiltonian
Stein manifold looks like a model manifold introduced in Example
\ref{Ex:1}. Using this theorem we prove some structure results on MF
Hamiltonian Stein manifolds, Lemmas \ref{Lem:1.2},\ref{Lem:1.1}. In
Section 3 we establish a certain sheaf of groups $\Aut$  on the
quotient $Y$ of a MF Hamiltonian Stein manifold $X$. Its sections
are Hamiltonian automorphisms that preserve the quotient map . This
sheaf is important because it controls gluing of local pieces of an
MF Hamiltonian manifolds together. The main goal of the Section is
to determine the sheaf $\Aut$ up to an isomorphism (Lemmas
\ref{Lem:2.2},\ref{Lem:2.3}). In Section 4 we prove Theorem
\ref{Thm:1}. The most crucial part of its proof is the special case
of the identity morphism $\varphi$. To examine this case we consider
a natural action of $H^1(Y,\Aut)$ on the set of all isomorphism
classes of MF Hamiltonian Stein $T$-manifolds $X$ such that
$\Y_X=(Y,\psi,\D,\bullet,\bullet)$. Finally, in Section 5 we prove
Theorem \ref{Thm:2}. The main idea of the proof is to consider
certain local lagrangian sections of the quotient map.

{\bf Acknowledgements.} Part of the work on this problem was done
during author's stay in Rutgers University, New Brunswick,  in the
beginning of 2007. I would like to thank this institution and
especially  Professor F. Knop for hospitality. I also express my
gratitude to F. Knop for stimulating discussions. Finally, I would
like to thank J. Martens for useful comments on an earlier version
of this article.

\section{Generalities on MF Hamiltonian Stein $T$-manifolds}\label{SECTION_prelim}
Until a further notice $T$ denotes a complex torus $(\C^\times)^n$
and $X$ is a Stein $T$-manifold.

By \cite{Snow}, there is a Stein space $Y$ and a surjective
$T$-invariant holomorphic map $\pi:X\rightarrow Y$ satisfying the
following conditions:
\begin{itemize} \item[(A)] $U\subset Y$ is open iff
$\pi^{-1}(U)$ is,\item[(B)] $\pi^*(\Oo(U))=\Oo(\pi^{-1}(U))^T$ for
any open subset $U\subset Y$.\end{itemize} Such  $Y,\pi$ have the
following universal property: for any Stein space  $Z$ and a
$T$-equivariant holomorphic map $\rho:X\rightarrow Z$ there is a
unique $T$-equivariant holomorphic map
$\underline{\rho}:Y\rightarrow Z$ such that
$\rho=\underline{\rho}\circ\pi$. Therefore $Y$ is called the {\it
categorical quotient} for the action $T:X$ and $\pi$ is called the
quotient map. It is known, see \cite{Snow}, that any fiber of $\pi$
contains a unique closed $T$-orbit.

In the sequel we will need the following notion.
\begin{defi}\label{defi:2}
A subset $X_0\subset X$ is called {\it saturated} if it  consists
of fibers of the quotient map, that is $X_0=\pi^{-1}(\pi(X_0))$.
\end{defi}

If $X_0$ is open and saturated, then $\pi(X_0)$ is open.

Now we quote the Snow slice theorem. Let $x\in X$ be a point with
closed $T$-orbit. Set $T_0:=T_x$ and let $V$ denote the slice
$T_0$-module $T_xX/\t_*x$. Then there is an open  saturated
neighborhood $U$ of $x$ in $X$ and an open $T_0$-stable neighborhood
$U_0$ of $0$ in $V$ such that $U$ is $T$-equivariantly isomorphic to
the homogeneous bundle $T*_{T_0}U_0:=(T\times U_0)/T_0$.  Let
$\pi_0$ denote the quotient map for the action $T_0:V$. From
property (B) above it follows that $\pi(U)\cong\pi_0(U_0)$ and the
quotient $\pi:U\rightarrow \pi(U)$ is identified with
$T*_{T_0}U_0\rightarrow \pi_0(U_0)$. Since $U$ is saturated, we see
that $U_0$ is also saturated (w.r.t. $T_0$).

Now we would like to produce some examples of MF Hamiltonian Stein
$T$-manifolds.

\begin{Ex}\label{Ex:1}
Fix a connected subgroup $T_0\subset T$, characters
$\chi_1,\ldots,\chi_k$ of $T_0$ forming a basis of $\X(T_0)$, and
$\lambda\in \t^*$. To this data we assign a MF Hamiltonian Stein
manifold $X$ as follows. Let $V$ be a vector space with a basis
$v_1,\ldots,v_k$. Define the linear action $T_0:V$ by
$t.v_i=\chi_i(t)v_i, i=\overline{1,k}, t\in T_0$. Let $X_0$ be the
homogeneous vector bundle $T*_{T_0}V$. There is a subtorus
$T_1\subset T$ such that $T_1\times T_0=T$. For any such $T_1$ we
have $X_0=T_1\times V$. Choose a basis
$\theta_1,\ldots,\theta_l,l=n-k,$ in $\X(T_1)$, set
$\beta_i:=\frac{d\theta_i}{\theta_i}=d(\ln\theta_i)$. One can regard
$\beta_1,\ldots,\beta_l$ as a basis in $\t_1^*$. Let
$\beta^i,i=\overline{1,l},$ be the basis in $\t_1$ dual to
$\beta_i$. Further, let $v^1,\ldots,v^k$ be the basis of $V^*$ dual
to $v_1,\ldots,v_k$. Set $X:=T^*X_0$. We can consider
$\theta_i,\beta^i,v_j,v^j$ as functions on $X$ and these functions
form a coordinate system. The manifold $X$ has the natural
symplectic form $\omega=-d\alpha$, where
$\alpha:=\sum_{i=1}^l\beta^id\ln\theta_i+\sum_{i=1}^k v_i dv^i$. The
form $\alpha$ is $T$-invariant and the action $T:X$ is Hamiltonian
with moment map $\mu(x)=\lambda+\sum_{i=1}^l
\beta^i(x)\beta_i+\sum_{i=1}^k v_i(x)v^i(x) d(\ln\chi_i)$. One can
easily check that $\alpha,\mu$ do not depend on choices we have made
(i.e., the choices of $v_i,T_1,\theta_j$). We also note that the
action $T:X$ is effective and $\alpha(\xi_*)=H_\xi$.

Let us check that the 5-tuple $\Y_X$ is well-defined and compute it.
We have $Y\cong \C^n$, $\beta^1,\ldots,\beta^l, v_1v^1,\ldots,
v_kv^k$ being coordinates. The map $\psi$ is identical. We easily
get $\D=\{D_i,i=\overline{1,k}\}$, where $\D_i$ is the affine
hyperplane of the form $v_iv^i=const$ containing $\lambda$.

Choose $\chi\in \X(T), \chi=\sum_{i=1}^l a_i\theta_i+\sum_{i=1}^k
b_i\chi_i$. Then $\Oo_\chi$ is a trivial line bundle on $Y$
generated by $(\prod_{i=1}^l\theta_i^{a_i})(\prod_{i=1}^k
u_i^{|b_i|})$, where $u_i=v_i$ if $b_i\geqslant 0$ and $v^i$
otherwise. So $c_1,\ldots,c_n$ are well-defined and equal to zero
for any choice of a basis in $\X(T)$.

Since $\omega$ is exact and $H^2(Y,\C)=0$, we see that $c_0$ in this
case is well-defined and equals $0$.

We remark that the 5-tuple $\Y_X$ is Delzant.

We call $X$  a {\it model Hamiltonian $T$-manifold} associated
with $T_0,\lambda,\chi_1,\ldots,\chi_k$. By a {\it base point} in $X$
we mean any point from a unique closed $T$-orbit in $X_0$ (i.e.,
any point $x\in X_0$ with $v^1(x)=\ldots=v^k(x)=0$).
\end{Ex}

\begin{defi}\label{defi:1.1}
Let $X_1,X_2$ be MF Hamiltonian Stein $T$-manifolds, $x_1\in
X_1,x_2\in X_2$ be points with closed $T$-orbits. We say that the
pairs $(X_1,x_1), (X_2,x_2)$ are {\it locally equivalent} if there
are saturated open neighborhoods $U_i,i=1,2,$ of $x_i$ in $X_i$ and
a Hamiltonian isomorphism  $\varphi:U_1\rightarrow U_2$ such that
$\varphi(x_1)=x_2$.
\end{defi}

Till the end of the section $X$ is an arbitrary MF Hamiltonian
Stein $T$-manifold.

\begin{Thm}[symplectic slice]\label{Thm:1.1}
Let $x\in X$ be a point with closed orbit, $T_0=T_x, \lambda=\mu(x)$.
Then the following assertions hold.
\begin{enumerate}
\item $T_0$ is connected, and the set of weights of $T_0$ in $T_xX$
has the form $\{0,\chi_1,\ldots,\chi_k,-\chi_1,$
$\ldots,-\chi_k\}$, where $\chi_1,\ldots,\chi_k$ is a basis of the
character group $\X(T_0)$.
\item Let $X'$ be a model Hamiltonian $T$-manifold associated with
$T_0,\lambda,\chi_1,\ldots,\chi_k$ and $x'$ its base point. Then the
pairs $(X,x)$ and $(X',x')$ are locally equivalent.
\end{enumerate}
\end{Thm}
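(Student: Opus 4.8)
The plan is to use the Snow slice theorem to reduce everything to the linear slice $T_0:V$, and then upgrade the holomorphic equivalence to a Hamiltonian one by a Moser-type/Darboux argument.

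\textbf{Step 1: connectedness of $T_0$ and the structure of the slice weights.}
Since $x$ has closed orbit, the Snow slice theorem gives a saturated neighborhood $U\cong T*_{T_0}U_0$ with $U_0\subset V=T_xX/\t_*x$. The key constraint is that the action $T:X$ is MF, i.e. a general orbit is lagrangian, and $X$ is symplectic. First I would observe that the orbit $Tx$ is isotropic, so $\t_*x\subset (\t_*x)^{\skewperp}$, and the slice representation $T_0:V$ inherits a $T_0$-invariant symplectic form from the symplectic normal space $(\t_*x)^{\skewperp}/(\t_*x\cap(\t_*x)^{\skewperp})$ — here one has to check $\t_*x$ is actually contained in its skew-orthogonal, which follows because $\mu$ is constant on $T$-orbits (as the $H_i$ are $T$-invariant) together with the moment map identity. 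Then $V$ is a symplectic $T_0$-module whose associated $T_0$-action is again MF (a general $T_0$-orbit in $V$ is lagrangian, because a general $T$-orbit in $U$ is lagrangian in $X$ and the bundle structure $T*_{T_0}U_0$ matches lagrangian orbits with lagrangian $T_0$-orbits). A symplectic torus representation that is MF and effective is forced to be a sum of $k=\dim T_0$ "hyperbolic" planes $\C v_i\oplus\C v^i$ with weights $\pm\chi_i$, with $\chi_1,\dots,\chi_k$ a basis of $\X(T_0)$: effectiveness forces the $\chi_i$ to span, the MF condition (lagrangian general orbit, so $\dim T_0 = \tfrac12\dim V$ and the weight spaces pair up under the symplectic form) forces exactly $k$ positive weights and their negatives, and the lattice statement again comes from effectiveness of the $T_0$-action on the associated quotient. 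This also shows $T_0$ is connected, since a finite part of $T_0$ would act trivially on $V$ hence on $U$, contradicting $T_x=T_0$ and effectiveness. The slice module $V$ is thus exactly the module $V$ built in Example \ref{Ex:1}, and $X'=T^*X_0$ with $X_0=T*_{T_0}V$ is the corresponding model.

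\textbf{Step 2: a holomorphic symplectomorphism near the base points.}
Having identified the slice $T_0$-module, I would produce a $T$-equivariant holomorphic diffeomorphism $\Phi$ from a saturated neighborhood of $x$ in $X$ onto a saturated neighborhood of $x'$ in $X'$ with $\Phi(x)=x'$ and $\mu=\mu'\circ\Phi$, but a priori $\Phi^*\omega'\neq\omega$. The existence of $\Phi$ as a mere $T$-equivariant holomorphic isomorphism respecting the moment maps follows from the Snow slice theorem applied to both $X$ and $X'$: both are $T*_{T_0}U_0$ for the \emph{same} $U_0\subset V$ after shrinking, and one checks the two quotient maps are identified and that both moment maps, restricted to the slice, have the form $\lambda + (\text{quadratic in } v_i,v^i) + (\text{coordinates on }\t_1^*)$ dictated by the moment map identity $\langle d_x\mu(v),\xi\rangle=\omega(\xi_*x,v)$ and $T$-equivariance — so after an automorphism of $X'$ (shift by $T$, linear change on $V$ commuting with $T_0$) the moment maps agree. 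Now $\omega$ and $\Phi^*\omega'$ are two $T$-invariant holomorphic symplectic forms on a saturated neighborhood of $x$, inducing the same moment map for the $T$-action (the equality $\mu=\mu'\circ\Phi$ says precisely that the velocity vector fields $\xi_*$ are skew-gradients of the same $H_\xi$ with respect to both forms, since $\alpha(\xi_*)=H_\xi$ holds for both). The plan is a holomorphic equivariant Moser argument: on a Stein (hence cohomologically trivial in the relevant degree) saturated neighborhood, write $\Phi^*\omega' - \omega = d\gamma$ for a $T$-invariant holomorphic $1$-form $\gamma$ with $\gamma$ vanishing to appropriate order along $Tx$, then check the contraction $\iota_{\xi_*}(\Phi^*\omega'-\omega) = d(H_\xi-H_\xi)=0$, which lets one choose $\gamma$ to annihilate all $\xi_*$; integrate the time-dependent vector field $Z_t$ defined by $\iota_{Z_t}\omega_t = -\gamma$ where $\omega_t=\omega + t\,d\gamma$ (nondegenerate near $Tx$ for all $t\in[0,1]$ after shrinking), obtaining a $T$-equivariant holomorphic flow fixing $Tx$ pointwise and carrying $\omega$ to $\Phi^*\omega'$. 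Composing with $\Phi$ gives the desired Hamiltonian local equivalence.

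\textbf{The main obstacle} is Step 2, specifically arranging the holomorphic Moser flow to be (i) defined on a \emph{saturated} neighborhood (so that the resulting equivalence is between saturated opens, as required by Definition \ref{defi:1.1}) and (ii) genuinely $T$-equivariant including in the $T_0$-fixed directions where the torus acts with weights $\pm\chi_i$. Point (i) is handled by running the whole argument downstairs on the quotient plus the slice — i.e. using the bundle description $T*_{T_0}U_0$ and doing Moser $T_0$-equivariantly on $U_0\subset V$ (which is Stein and $T_0$-saturated), then inducing up; point (ii) requires choosing the primitive $\gamma$ by an equivariant averaging/homotopy operator on the Stein manifold $U_0$, which exists because $T_0$ is reductive and $U_0$ is Stein, and noting that the obstruction to $\gamma$ being $T_0$-invariant and $\iota_{\xi_*}\gamma=0$ vanishes for the reasons above. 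One also needs to be slightly careful that $\Phi^*\omega'-\omega$ vanishes \emph{at} the points of $Tx$ so that $\omega_t$ stays symplectic on a neighborhood for all $t$; this is automatic since both forms have the same moment map and the same value of the associated $1$-form $\alpha$ along the closed orbit, pinning down the symplectic form on $T_xX$ completely — indeed the MF condition forces $\dim X = 2n$ and $\omega_x$ is determined by the pairing between $\t_*x$, the $v_i$-directions, and their symplectic complements.
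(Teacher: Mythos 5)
Your Step 1 is essentially the argument the paper itself gives: the symplectic slice $V=(\t_*x)^{\skewperp}/\t_*x$ is an effective symplectic $T_0$-module of dimension $2\dim T_0$, so its weights pair off as $\pm\chi_1,\ldots,\pm\chi_k$ with the $\chi_i$ a basis of the character lattice. One caveat: your connectedness argument (``a finite part of $T_0$ would act trivially on $V$'') is not justified as stated, since the component group of $T_0$ could act through characters that are nontrivial on it; you need either the paper's observation that $T_0^\circ$ is a maximal torus of $\operatorname{Sp}(V)$ and hence self-centralizing, or a lattice argument: faithfulness forces the weights to generate $\X(T_0)$, and since their restrictions already form a basis of $\X(T_0^\circ)$, the (finite) character group of $T_0/T_0^\circ$ must vanish.

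The genuine gaps are in Step 2, which is where the paper simply invokes the symplectic slice theorem of \cite{slice}. (a) You cannot arrange $\mu=\mu'\circ\Phi$ at the outset in the way claimed: under Snow's isomorphism $U\cong T*_{T_0}U_0$ the invariant components $H_\xi$ of $\mu$ are a priori \emph{arbitrary} holomorphic functions of the invariants $v_iv^i$ and of the trivial-weight slice coordinates; the moment map identity only ties them to $\omega$, which is exactly the unknown. Asserting that $\mu$ restricted to the slice has the model's linear-plus-quadratic form is assuming a substantial part of the normal form you are trying to prove. (This is repairable --- an equivariant symplectomorphism automatically intertwines moment maps up to a constant, killed by $\mu(x)=\lambda=\mu'(x')$ --- but then your construction of $\gamma$ with $\iota_{\xi_*}\gamma=0$ has to be redone.) (b) The justification ``Stein, hence cohomologically trivial in the relevant degree'' for writing $\Phi^*\omega'-\omega=d\gamma$ is false: Steinness kills coherent cohomology, not $H^2(\cdot,\C)$ --- the whole point of the invariant $c_0\in H^2(Y,\C)$ in this paper is that Stein manifolds have nontrivial second cohomology. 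A saturated neighborhood of the closed orbit is homotopy equivalent to $T/T_0\cong(\C^\times)^{n-k}$, whose $H^2$ is nonzero as soon as $n-k\geq 2$; exactness of the difference form has to be extracted instead from the fact that the closed orbit is isotropic for both forms together with a retraction of the neighborhood onto it. (c) The real crux of any holomorphic Moser argument here --- existence of the time-one flow of $Z_t$ on a \emph{saturated} neighborhood of the noncompact orbit $Tx$ --- is only gestured at: ``doing Moser on $U_0$'' is not meaningful, since the slice $U_0$ carries no symplectic structure; one must run the flow on $U$ itself, use $T$-invariance of $Z_t$ and vanishing of $\gamma$ along $Tx$ to get a $T$-stable domain of definition containing the closed orbit, and then argue (using that the quotient map sends closed invariant sets to closed sets) that such a domain contains a saturated neighborhood. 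As written, these three points would stop the proof; they are precisely the technical content hidden in the citation of \cite{slice}.
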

\begin{proof}
Note that $T_xX$ is a symplectic $T_0$-module. There is a trivial
isotropic $T_0$-submodule $\t_*x\subset T_xX$. Therefore
$T_xX/(\t_*x)^\skewperp$, where $^\skewperp$ denotes the
skew-orthogonal complement, is also a trivial $T_0$-module. Set
$V:=(\t_*x)^\skewperp/\t_*x$. This is a symplectic $T_0$-module of
dimension $\dim X-2\dim Tx=2\dim T_0$. By Snow's slice theorem, the
action $T_0:V$ is effective. It follows that the weights of
$T_0^\circ$ in $V$ have the form
$\chi_1,\ldots,\chi_k,-\chi_1,\ldots,-\chi_k$, where
$\chi_1,\ldots,\chi_k$ form a basis of $\X(T_0^\circ)$. Therefore
$T_0^\circ$ is a maximal torus in $\operatorname{Sp}(V)$ whence
$T_0=T_0^\circ$. So assertion 1 is proved.

In view of the Snow slice theorem, the proof of assertion 2 is
completely analogous to that of the symplectic slice theorem from
\cite{slice} (where it was proved for Hamiltonian reductive group
actions on affine algebraic varieties).
\end{proof}

\begin{Cor}\label{Cor:1.1}
Let $y\in Y$ and $U$ be an open neighborhood of $y$ in $Y$. Then
there is an open Stein neighborhood $U_0$ of $y$ in $U$ such that
$\pi^{-1}(U_0)$ is Stein.
\end{Cor}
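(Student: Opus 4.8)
The plan is to reduce to the model Hamiltonian $T$-manifold of Example~\ref{Ex:1} by means of the symplectic slice theorem, and then to settle the statement there by an explicit computation. Since every fibre of $\pi$ contains a closed $T$-orbit, I would pick $x\in\pi^{-1}(y)$ whose $T$-orbit is closed and set $T_0:=T_x$, $\lambda:=\mu(x)$ (so $T_0$ is connected by Theorem~\ref{Thm:1.1}(1)). Let $X'$ be the model Hamiltonian $T$-manifold associated with $T_0,\lambda,\chi_1,\dots,\chi_k$ and $x'$ its base point. By Theorem~\ref{Thm:1.1}(2) and Definition~\ref{defi:1.1} there are saturated open neighbourhoods $\widetilde U\ni x$ in $X$, $\widetilde U'\ni x'$ in $X'$ and a Hamiltonian isomorphism $\varphi:\widetilde U\to\widetilde U'$ with $\varphi(x)=x'$. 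Being $T$-equivariant, $\varphi$ descends (via the universal property of the quotient, together with the fact that $\psi,\psi'$ are \'{e}tale) to a biholomorphism $\bar\varphi$ of the open neighbourhoods $W:=\pi(\widetilde U)\ni y$ and $W':=\pi'(\widetilde U')\ni y':=\pi'(x')$, with $\pi'\circ\varphi=\bar\varphi\circ\pi$ on $\widetilde U$ and $\bar\varphi(y)=y'$. It is then enough to find, inside $\bar\varphi(U\cap W)$, a Stein open neighbourhood $U_0'$ of $y'$ with $\pi'^{-1}(U_0')$ Stein: for such a $U_0'$ the set $U_0:=\bar\varphi^{-1}(U_0')$ is a Stein open neighbourhood of $y$ contained in $U$, and since $U_0'\subseteq W'$ while $\widetilde U'$ is saturated we have $\pi'^{-1}(U_0')\subseteq\widetilde U'$, so $\pi^{-1}(U_0)=\varphi^{-1}\bigl(\pi'^{-1}(U_0')\bigr)$ by the relation $\pi'\circ\varphi=\bar\varphi\circ\pi$; being biholomorphic to a Stein manifold, $\pi^{-1}(U_0)$ is Stein.

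For the model computation I would use Example~\ref{Ex:1}: $Y'\cong\C^n$ with coordinates $\beta^1,\dots,\beta^l,v_1v^1,\dots,v_kv^k$ and $y'=0$, while $X'=T^*X_0$ carries the global coordinates $\theta_i,\beta^i,v_j,v^j$ (the $\theta_i$ being the coordinates on $T_1\cong(\C^\times)^l$) and $\pi'(\theta,\beta,v,v^*)=(\beta^1,\dots,\beta^l,v_1v^1,\dots,v_kv^k)$. For $\epsilon>0$ let $P_\epsilon\subset Y'$ be the polydisc $\{\,|\beta^i|<\epsilon\ \forall i,\ |v_jv^j|<\epsilon\ \forall j\,\}$; these form a neighbourhood basis of $0$, so it is enough to show $\pi'^{-1}(P_\epsilon)$ is Stein for small $\epsilon$. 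One computes $\pi'^{-1}(P_\epsilon)=(\C^\times)^l\times\{|\beta^i|<\epsilon\}\times\bigcap_{j=1}^k\{(v,v^*)\in V\times V^*:|v_jv^j|<\epsilon\}$. The first two factors are Stein; each set $\{(v,v^*):|v_jv^j|<\epsilon\}$ is biholomorphic to $\{(a,b)\in\C^2:|ab|<\epsilon\}\times\C^{2k-2}$, and $\{|ab|<\epsilon\}=\{(a,b):|a|<\epsilon/|b|\}$ is a Hartogs domain over $\C_b$ whose negative-log radius $\log|b|-\log\epsilon$ is plurisubharmonic, hence pseudoconvex. A finite intersection of pseudoconvex domains is pseudoconvex, so the third factor is pseudoconvex, and therefore Stein by the solution of the Levi problem. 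Thus $\pi'^{-1}(P_\epsilon)$ is a product of Stein manifolds, hence Stein; taking $U_0'=P_\epsilon$ with $\epsilon$ small enough that $P_\epsilon\subseteq\bar\varphi(U\cap W)$ completes the argument.

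The formal ingredients here — descent of $\varphi$ to the quotients and the bookkeeping with saturated neighbourhoods — are routine. The only substantive point, and the step that needs genuine care, is the Stein-ness of the model preimage $\pi'^{-1}(P_\epsilon)$, which in the end reduces to the standard fact that $\{(a,b)\in\C^2:|ab|<\epsilon\}$ is pseudoconvex.
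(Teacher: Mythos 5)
Your proposal is correct and follows essentially the same route as the paper: reduce to the model manifold of Example \ref{Ex:1} via the slice theorem \ref{Thm:1.1}, observe that the preimage of a small polydisc splits as a product, and settle everything on the key fact that $\{(a,b)\in\C^2:|ab|<\epsilon\}$ is Stein. The only (immaterial) difference is in the justification of that last fact and in the bookkeeping: the paper reduces to $n=1$ and cites that $\{|v_1v^1|<d\}$ is a holomorphic region (domain of holomorphy), whereas you argue pseudoconvexity of the Hartogs domain and invoke the Levi problem.
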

\begin{proof}
Thanks to Theorem \ref{Thm:1.1}, it is enough to consider the case
when $X$ is model and $y$ is the image of its base point $x$. Since the
product of two Stein manifolds is again Stein, the
claim can be easily reduced to the case when (in the notation of
Example \ref{Ex:1}) $n=1$. If $k=0$, then
$\pi^{-1}(U_0)\cong U_0\times\C^\times$. Let $k=1$. Here we need
to check that the domain in $\C^2$ given by $|v_1v^1|<d$ is Stein
for all (equivalently, some) positive $d$. But the latter is a
holomorphic region whence a Stein manifold (see \cite{Onishchik},
Example 4.1).
\end{proof}

\begin{Rem}\label{Rem:1.1}
Now let $X^1,X^2$ be model Hamiltonian $T$-manifolds corresponding
to the data $(T_0^1,\lambda^1,\chi^1_1,\ldots,\chi^1_{k_1}),
(T_0^2,\lambda^2,\chi_1^2,\ldots,\chi_{k_2}^2)$ and $x^1,x^2$ be the
corresponding base points. Then the pairs $(X^1,x^1),(X^2,x^2)$ are
locally equivalent iff the following conditions are satisfied:
\begin{enumerate}
\item $T_0^1=T_0^2$ whence $k_1=k_2$.
\item $\lambda^1=\lambda^2$.
\item After reordering $\chi^2_1,\ldots,\chi^2_{k_2}$, we get $\chi_i^1=\pm
\chi^2_i, i=\overline{1,k_1}$.
\end{enumerate}
\end{Rem}

Below we will often use a certain open subvariety of
$X$ associated with a "general enough" element $\zeta\in
\X(T)\otimes \R$.

We say that  $\zeta\in \X(T)\otimes \R$ is {\it general} for $X$
if $\zeta$ is nonzero on the Lie algebra of $T_x$ for any $x\in
X$. Since the set of all $T_x$ is countable, we see that  such
$\zeta$ does exist. By $X^\zeta$ we denote the subset of $X$
consisting of all points $x$ satisfying the following two
conditions:
\begin{enumerate}
\item $T_x=\{1\}$.
\item Let $\tau:\C^\times\rightarrow T$ be a one-parameter subgroup.
If $\lim_{t\rightarrow 0}\tau(t).x$ exists in $X$, then
$\langle\zeta,\tau\rangle>0$ (we note that the l.h.s of the
previous inequality is always nonzero because $\im\tau$ lies in
the stabilizer of $\lim_{t\rightarrow 0}\tau(t).x$).
\end{enumerate}

The following lemma describes some properties of the subset
$X^\zeta\subset X$.

\begin{Lem}\label{Lem:1.2}
\begin{enumerate}
\item If $X_0$ is an open saturated Stein subset of $X$, then $(X_0)^\zeta=X^\zeta\cap
X_0$.
\item The set $X^\zeta$ is open in $X$.
\item For any $y\in Y$ the intersection $X^\zeta\cap \pi^{-1}(Y)$
is a single $T$-orbit.
\item The map $\pi|_{X^\zeta}:X^\zeta\rightarrow Y$ is a locally trivial principal $T$-bundle whence
the quotient map for the
action $T:X^\zeta$.
\end{enumerate}
\end{Lem}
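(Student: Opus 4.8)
The plan is to reduce everything to the model case via the symplectic slice theorem (Theorem \ref{Thm:1.1}), and then to make the explicit computation in the model. First, for assertion 1: the condition $T_x = \{1\}$ is intrinsic to $x$, and the existence (or not) of $\lim_{t\to 0}\tau(t).x$ in $X$ versus in $X_0$ is the same when $X_0$ is open, saturated and Stein — indeed, if the limit exists in $X$ it lies in a closed orbit of its fiber, which is in $X_0$ since $X_0$ is saturated, and conversely a limit in $X_0$ is a limit in $X$. Hence the defining conditions for $X^\zeta$ and $(X_0)^\zeta$ coincide on $X_0$. (One should note $\zeta$ general for $X$ is automatically general for $X_0$.)

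For assertions 2, 3, 4, I would first observe that by assertion 1 the question is local over $Y$, so by Theorem \ref{Thm:1.1} and Corollary \ref{Cor:1.1} it suffices to treat a model Hamiltonian $T$-manifold $X$ as in Example \ref{Ex:1}, restricted to a saturated Stein neighborhood $U$ of a base point. Since a product of the relevant pieces is involved and the constructions are compatible with products, one reduces further to the two atomic cases $n=1$: either $T_0 = \{1\}$, $k=0$, where $X = T^*(T) = \C^\times\times\C$ with $Y = \C$ and $\pi$ the projection to the second coordinate (the $\beta^1$-coordinate) — here $X^\zeta$ is all of $X$ (the stabilizers are trivial everywhere and the only one-parameter subgroups are powers of the identity, with limits never existing), and $\pi$ is visibly a trivial $\C^\times$-bundle; or $T_0 = T = \C^\times$, $k=1$, where $X = T^*(V)$ with $V = \C$, coordinates $v_1, v^1$, the torus acting by $t.v_1 = t v_1$, $t.v^1 = t^{-1}v^1$, and $\pi(v_1,v^1) = v_1 v^1 \in Y \cong \C$. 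In this last case the closed orbit in the fiber over $y\ne 0$ is the single orbit $\{v_1 v^1 = y\}$ (a $\C^\times$-orbit with trivial stabilizer), and the fiber over $0$ is $\{v_1 = 0\}\cup\{v^1 = 0\}$ with closed orbit the origin. The point $x = (v_1,v^1)$ has $T_x = \{1\}$ iff $v_1\ne 0$ and $v^1\ne 0$; the one-parameter subgroups are $\tau_m(t) = t^m$, and $\lim_{t\to 0}t^m.(v_1,v^1)$ exists iff ($m>0$ and $v_1 = 0$) or ($m<0$ and $v^1=0$), which cannot happen when $v_1 v^1 \ne 0$. Hence over $y\ne 0$ no limit-condition is imposed and $X^\zeta\cap\pi^{-1}(y)$ is exactly that single free orbit, while over $y = 0$ the condition $T_x = \{1\}$ already fails on the whole fiber, so $X^\zeta\cap\pi^{-1}(0) = \varnothing$ — but $0\notin Y$ once we pass to the saturated neighborhood $U$ with $|v_1 v^1| < d$ intersected appropriately; more precisely, within $U$ the set $X^\zeta = \{v_1\ne 0, v^1\ne 0\}\cap U$ is open (assertion 2), meets each fiber over $\pi(U)\setminus\{0\}$ in one free $T$-orbit, and we must handle the fiber over $0$ separately.

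For the fiber over $0$ I would argue directly rather than in the model: for general $X$ and $y\in Y$, let $x$ be in the closed orbit over $y$ with $T_0 = T_x$; since $\zeta$ is general, $\zeta|_{\t_0}\ne 0$, so among the weights $\{0,\pm\chi_1,\dots,\pm\chi_k\}$ of $T_0$ on the slice there is a choice of signs making $X^\zeta$ meet the fiber — working in the model, $X^\zeta\cap\pi^{-1}(y)$ consists of those points whose "escaping directions" are all $\zeta$-positive, and genericity of $\zeta$ forces this locus to be a single free $T$-orbit (this is the standard GIT-type picture: $X^\zeta$ is the semistable-and-free locus for the linearization given by $\zeta$, and each fiber of the GIT quotient contains a unique closed such orbit, which here is free). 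Granting assertion 3, assertion 4 follows: $\pi|_{X^\zeta}$ is a surjective $T$-invariant holomorphic map with all fibers single free $T$-orbits, hence a principal $T$-bundle; local triviality over the Stein base $Y$ follows because $T = (\C^\times)^n$ is special (or: use the slice theorem, where local triviality is manifest in the model $X^\zeta \cong \pi(U_0)\times T$). The main obstacle is the bookkeeping in assertion 3 — pinning down that the $\zeta$-positivity condition cuts each fiber down to exactly one orbit and that this orbit is free — for which the cleanest route is to reduce to the $n=1$ atomic models above and then reassemble by the product structure, carefully checking that "general for $X$" restricts to "general" for each factor.
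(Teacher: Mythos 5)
Your overall strategy (reduce to the model manifolds of Example \ref{Ex:1} via Theorem \ref{Thm:1.1} and compute there) is the same as the paper's, and your argument for assertion 1 is essentially the paper's (the limit, if it exists, stays in the fiber $\pi^{-1}(\pi(x))$, which lies in $X_0$ by saturation — though note the limit point need not lie on a \emph{closed} orbit, only in the fiber). The problem is that your explicit computation in the atomic model $k=1$, $X=T^*(V)$, $V=\C$, is wrong, and this is exactly the heart of the lemma. The stabilizer of $(v_1,v^1)$ is trivial whenever $(v_1,v^1)\neq(0,0)$, not only when both coordinates are nonzero; and the limit condition does not empty the fiber over $0$. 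Writing $\zeta=b_1\chi_1$ with $b_1\neq 0$ (genericity) and, after a sign flip as in Remark \ref{Rem:1.1}, $b_1>0$, one checks that $X^\zeta=\{v_1\neq 0\}$: the point $(v_1,0)$, $v_1\neq 0$, has trivial stabilizer, its only destabilizing one-parameter subgroups are $\tau_m$ with $m>0$, and these satisfy $\langle\zeta,\tau_m\rangle=mb_1>0$. Hence $X^\zeta\cap\pi^{-1}(0)=\{v^1=0,\,v_1\neq 0\}$ is a single free $T$-orbit — precisely what assertion 3 claims — whereas you conclude $X^\zeta\cap\pi^{-1}(0)=\varnothing$, which would contradict the lemma. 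Your escape route, ``$0\notin Y$ once we pass to the saturated neighborhood $U$,'' is untenable: the saturated neighborhood is taken around the base point, whose image is exactly that special point of $Y$, so the fiber over it cannot be discarded.

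The closing paragraph, where you invoke a ``standard GIT-type picture'' (semistable-and-free locus, unique closed orbit in each fiber), does not repair this: as stated it is not even correct in the model, since the unique closed $T$-orbit in the fiber over the special point is the base-point orbit, which has stabilizer $T_0$ and does not meet $X^\zeta$ at all; the orbit you need is non-closed in $X$ (only closed inside $X^\zeta$), and identifying it requires exactly the sign/weight bookkeeping you got wrong above. The paper does this directly: write $\zeta=\sum a_i\theta_i+\sum b_i\chi_i$, use genericity to get all $b_i\neq 0$, normalize $b_i>0$ via Remark \ref{Rem:1.1}, and check that $X^\zeta=\{x\mid v_i(x)\neq 0,\ i=\overline{1,k}\}$; assertions 2--4 then follow at once (openness is clear, each fiber of $\pi$ meets this set in the single free orbit with prescribed $v_iv^i$ and $\beta^i$, and the bundle is visibly trivial over the model quotient). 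Your product-decomposition idea and the remark on restricting $\zeta$ to factors are fine in principle, but you must redo the atomic computation correctly; also, the appeal to $T$ being a ``special group'' for local triviality is out of place in the holomorphic category — the local triviality should be read off from the model, as your parenthetical alternative (and the paper) does.
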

\begin{proof}
The limit in (2) exists in $X$ iff it exists in $\pi^{-1}(\pi(x))$
whence the first assertion. Using Theorem \ref{Thm:1.1}, we reduce
assertions 2,3 to the case when $X$ is a model Hamiltonian manifold.
It follows from assertion 3 that any fiber of $\pi|_{X^\zeta}$ is a
single orbit. Thus it is enough to prove assertion 4 also for model
manifolds only. So let $X$ be a model Hamiltonian $T$-manifold and
$T_0,\chi_1,\ldots,\chi_k,\lambda, \theta_i,\beta^i,v_j,v^j$ be such
as in Example \ref{Ex:1}. Write the decomposition
$\zeta=\sum_{i=1}^l a_i\theta_i+\sum_{i=1}^k b_i\chi_i$. The
condition that $\zeta$ is general implies that all $b_i$ are
nonzero. Thanks to Remark \ref{Rem:1.1}, we may assume that all
$b_i$ are positive. In  this case one checks directly that
$X^\zeta=\{x\in X| v_i(x)\neq 0, i=\overline{1,k}\}$ and assertions
2-4 follow.
\end{proof}

\begin{Lem}\label{Lem:1.1}
\begin{enumerate}
\item
$Y$ is smooth and $\psi$ is \'{e}tale.
\item
Let $T_0$ be a one-dimensional connected subgroup of $T$ and $Y_0\in
\D(T_0)$. Then  there is $\alpha\in \t^*$ such that $Y_0$ is a
connected component of $\psi^{-1}(\alpha+\t_0^\perp)$.
\item
For any $\chi\in \X(T)$ the sheaf $\Oo_\chi$ is a line bundle.
\item There is a 2-form $\omega_0\in \Omega^2(Y)$ and a $T$-invariant
one-form $\alpha\in \Omega^1(X)$
such that $\omega=\pi^*(\omega_0)-d\alpha$,
 and $\alpha(\xi_*)=H_\xi$ for any $\xi\in\t$.
Moreover, the class of $\omega_0$ in $H^2_{DR}(Y)$ does not depend
on the choice $\omega_0,\alpha$.
\item The 5-tuple $\Y_X$ is well-defined and Delzant.
\end{enumerate}
\end{Lem}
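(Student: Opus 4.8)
The plan is to reduce all five assertions, as far as possible, to the explicit local picture furnished by the symplectic slice theorem (Theorem~\ref{Thm:1.1}): every $y\in Y$ has a neighbourhood of the form $\pi(U)$, where $U$ is a saturated neighbourhood of the closed orbit $T\!\cdot\!x(y)$ in $\pi^{-1}(y)$ that is Hamiltonian-isomorphic to a saturated neighbourhood of the base point of a model manifold as in Example~\ref{Ex:1}, and for model manifolds the whole $5$-tuple has been computed there. Since being a manifold, being étale, being a line bundle, and the Delzant condition are all local on $Y$, these will follow directly; the only genuinely global input will be the vanishing $H^1(Y,\Omega^1_Y)=0$ on the Stein manifold $Y$ (Cartan's Theorem~B), used to globalize local $1$-forms in assertion~4.

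I would first dispatch assertions~1 and~3 and the Delzant part of~5, all of which are local. In the local model $Y$ is identified with an open subset of $\C^n=\t^*$ and $\psi$ with the identity, so $Y$ is smooth and $\psi$ is étale; and Example~\ref{Ex:1} shows that on each chart $\pi(U)$ the sheaf $\Oo_\chi$ is free of rank $1$, so $\Oo_\chi$ is a line bundle. For the Delzant property: near $y$, by the model computation, the divisors of $\D$ meeting a small neighbourhood are exactly the $k=\dim T_{x(y)}$ coordinate hypersurfaces $\{v_jv^j=\mathrm{const}\}$, with associated one-dimensional subgroups $T_0^{(j)}=\bigcap_{j'\neq j}\ker\chi_{j'}$; since $T_{x(y)}$ is a connected subtorus of $T$, the cocharacter lattice $\X(T_{x(y)})^*$ is a direct summand of $\X(T)^*$, and inside it the primitive cocharacters of the $T_0^{(j)}$ form precisely the basis of $\X(T_{x(y)})^*$ dual to $\chi_1,\dots,\chi_k$, hence extend to a basis of $\X(T)^*$.

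For assertion~2, fix $X_0\in\widetilde\D(T_0)$ and $Y_0=\pi(X_0)$. For $\xi\in\t_0$ one has $dH_\xi=\iota_{\xi_*}\omega$, and $\xi_*$ vanishes along $X_0\subseteq X^{T_0}$, so $H_\xi$ is constant on the connected set $X_0$; thus $\mu(X_0)$, hence $\psi(Y_0)$, lies in an affine subspace $\alpha+\t_0^\perp$ and $Y_0\subseteq Z:=\psi^{-1}(\alpha+\t_0^\perp)$. Now pick $x^*\in X_0$ with $T_{x^*}=T_0$ and let $x_0$ be the closed-orbit point in $\pi^{-1}(\pi(x^*))$; then $x_0\in X_0$ and $T_0\subseteq T_{x_0}$. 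Applying the slice theorem at $x_0$ and unwinding the description of stabilizers in the Snow model (in which $x^*$ sits, being in the fibre of $x_0$ inside the saturated neighbourhood), the existence of a point with stabilizer exactly $T_0$ forces exactly one weight $\chi_j$ of $T_{x_0}$ to restrict nontrivially to $T_0$; hence $X^{T_0}$ has codimension $2$ along $X_0$ and, on this chart, $Y_0$ coincides with $Z$, so $Y_0$ is open in $Z$. Finally, $X_0$ is a closed submanifold of the Stein manifold $X$, hence Stein, carries a $T/T_0$-action, and its Snow quotient maps injectively to $Y$ (as $\pi$ separates closed $T$-orbits) with image $Y_0$; therefore $Y_0$ is a closed analytic subset of $Y$, and being also open in $Z$ it is a connected component of $Z$. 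The hard part is exactly this assertion: pinning down that $\widetilde\D(T_0)$-components have codimension precisely $2$ and that $\pi(X_0)$ fills out a whole component of $\psi^{-1}(\alpha+\t_0^\perp)$ rather than a proper subset — this needs careful stabilizer bookkeeping in the Snow slice model together with the fact that $\pi$-images of closed sets need not be closed, the latter handled through the Stein quotient of $X_0$.

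For assertion~4, cover $Y$ by Stein opens $U_j=\pi(U_j')$ of the above type; by Example~\ref{Ex:1} there are $T$-invariant holomorphic $1$-forms $\alpha_j$ on $\pi^{-1}(U_j)$ with $\iota_{\xi_*}\alpha_j=H_\xi$ and $2$-forms $\omega_{0,j}$ on $U_j$ with $\omega=\pi^*\omega_{0,j}-d\alpha_j$. On overlaps $\iota_{\xi_*}(\alpha_j-\alpha_k)=0$ for all $\xi$, so $\alpha_j-\alpha_k=\pi^*\gamma_{jk}$ with $\gamma_{jk}\in\Omega^1(U_j\cap U_k)$; the cocycle $(\gamma_{jk})$ takes values in the coherent sheaf $\Omega^1_Y$, and since $Y$ is Stein, $H^1(Y,\Omega^1_Y)=0$, whence $\gamma_{jk}=\gamma_j-\gamma_k$. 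Then $\alpha:=\alpha_j-\pi^*\gamma_j$ glues to a global $T$-invariant holomorphic $1$-form with $\iota_{\xi_*}\alpha=H_\xi$, and $\omega+d\alpha$ is basic, hence $=\pi^*\omega_0$ for a unique closed $\omega_0\in\Omega^2(Y)$; replacing $(\alpha,\omega_0)$ by another such pair changes $\alpha$ by some $\pi^*\delta$, $\delta\in\Omega^1(Y)$, and $\omega_0$ by $d\delta$, so $[\omega_0]\in H^2_{DR}(Y)$ is well-defined. Assertion~5 then assembles the pieces: $Y$ a manifold and $\psi$ étale by~1, the members of $\D$ smooth hypersurfaces by~2 (since $\dim\t_0=1$), $c_i=[\Oo_{\chi_i}]\in\Pic(Y)\cong H^2(Y,\Z)$ (exponential sequence and Cartan~B) by~3, and $c_0=[\omega_0]\in H^2_{DR}(Y)\cong H^2(Y,\C)$ by~4, with the Delzant property established above.
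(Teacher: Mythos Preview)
Your approach is essentially the paper's: assertions 1--3 and 5 are reduced to the explicit local model (Theorem~\ref{Thm:1.1} and Example~\ref{Ex:1}) --- the paper literally says these ``follow directly'' from those two results --- and your argument for assertion~4 is the same \v{C}ech construction using $H^1(Y,\Omega^1_Y)=0$ on the Stein quotient.

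Two points deserve comment. First, in both your existence and uniqueness arguments for (4) you use that a $T$-invariant holomorphic $1$-form $\beta$ on $X$ with $\iota_{\xi_*}\beta=0$ for all $\xi\in\t$ is of the form $\pi^*\delta$; since $d\pi$ fails to be surjective along the fixed loci this is not automatic. The paper fills this (for uniqueness) by passing to $X^\zeta$ via Lemma~\ref{Lem:1.2}(4), where $\pi$ is an honest principal $T$-bundle, obtaining $\delta$ there, and then noting that the identity $\beta=\pi^*\delta$ extends from the dense open $X^\zeta$ to all of $X$ by holomorphy. Second, in your argument for (2) the inference ``the Snow quotient of $X_0$ injects holomorphically into $Y$, therefore $Y_0$ is closed'' does not follow as stated --- injective holomorphic maps between Stein spaces need not have closed image. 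The fix is again purely local and in the spirit of the paper's one-line proof: if $y\in\overline{Y_0}$, apply the slice theorem at the closed-orbit point $x_0$ over $y$; in the resulting connected model neighbourhood $X^{T_0}$ is a single connected linear slice, it meets $X_0$ (the closed-orbit points over nearby $y_n\in Y_0$ lie in $X_0$ and in this neighbourhood), hence coincides with $X_0$ there, so $x_0\in X_0$ and $y=\pi(x_0)\in Y_0$.
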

\begin{proof}[Proof of Lemma \ref{Lem:1.1}]
Everything but assertion 4 follows directly from Theorem
\ref{Thm:1.1} and Example \ref{Ex:1}. Let us prove assertion 4.

Let us check that $[\omega_0]\in H^2_{DR}(Y)$ is unique if
$\omega_0$ exists. Let
$\omega=\pi^*(\omega^1_0)-d\alpha^1=\pi^*(\omega^2_0)-d\alpha^2$ be
two representations of the required form. Then
$\alpha^0:=\alpha^1-\alpha^2$ is a $T$-invariant form such that
$\alpha^0(\xi_*)=0$. Choose a general for $X$ element $\zeta\in
\X(T)\otimes_\Z\R$. By assertion 4 of Lemma \ref{Lem:1.2}, the map
$\pi:X^\zeta\rightarrow Y$ is a principal $T$-bundle. It follows
that there is $\alpha_0\in \Omega^1(Y)$ such that
$\alpha^0=\pi^*(\alpha_0)$ on $X^\zeta$ and thence on the whole
manifold $X$. So $\pi^*(\omega_0^1-d\alpha_0)=\pi^*(\omega_0^2)$
whence $\omega_0^1-d\alpha_0=\omega_0^2$.

So it remains to prove that there exist $\omega_0,\alpha$ with the
required properties.

It follows from Example \ref{Ex:1} and Corollary \ref{Cor:1.1}
that there exists an open covering $Y=\cup_{i\in I}Y_i$ satisfying
the following conditions:
\begin{enumerate}
\item Both $Y_i$ and $\pi^{-1}(Y_i)$ are Stein.
\item $\omega|_{\pi^{-1}(Y_i)}=-d\alpha_i$, where $\alpha_i$ is a
holomorphic $T$-invariant 1-form on $\pi^{-1}(Y_i)$ such that
$\alpha_i(\xi_*)=H_\xi$.
\end{enumerate}

For a finite subset $J\subset I$ set $Y_J:=\cap_{j\in J}Y_j$.

Set $\alpha_{ij}:=\alpha_i-\alpha_j$. This is a $T$-invariant
holomorphic 1-form on $\pi^{-1}(Y_{ij})$ with
$\alpha_{ij}(\xi_*)=0$. It follows that there is a (unique)
holomorphic 1-form $\gamma_{ij}$ on $Y_{ij}$ such that
$\alpha_{ij}=\pi^*(\gamma_{ij})$. The collection $(\gamma_{ij})$
is a 1-cocycle in $\Omega^1$. Since $Y$ is Stein, we see that
there are holomorphic 1-forms $\gamma_i$ on $Y_i$ such that
$\gamma_{ij}=\gamma_i-\gamma_j$. Further,
$0=d\alpha_{ij}=\pi^*(d\gamma_{ij})$, whence $d\gamma_i=d\gamma_j$
on $Y_{ij}$. Set $\omega_0:=d\gamma_i$ on $Y_i$. Then
$\omega-\omega_0$ coincides with $d(\alpha_i-\pi^*(\gamma_i))$ on
$\pi^{-1}(Y_i)$. Since $\alpha_i-\pi^*(\gamma_i)=\alpha_j-\pi^*(\gamma_j)$
on $\pi^{-1}Y_j$, we may set $\alpha=\alpha_i-\pi^*(\gamma_i)$.
\end{proof}

\begin{Rem}\label{Rem:1.2}
Let $\Y_X=(Y,\psi,\D,(c_i)_{i=1}^n,c_0)$. Let us show how to recover
the structure of $X$ in a neighborhood of a point $x\in X$ with
closed orbit. Let $D_1,\ldots,D_k$ be all elements of $\D$
containing $\pi(x)$. For each $i$ there is a unique affine
hyperplane $\Gamma_i\subset \t^*$ containing $\psi(D_i)$. Let
$\eta_i,i=\overline{1,k},$ be a nonzero element from $\t$ lying in
the annihilator of the (linear) hyperplane associated with
$\Gamma_i$. Then the Lie algebra of $T_0$ is spanned by
$\eta_1,\ldots,\eta_k$, the character $\chi_i,i=\overline{1,k}$ is a
primitive element of $\X(T_0)$ annihilating all $\eta_j$ with $j\neq
i$ (a character $\chi_i$ is determined uniquely up to changing  the
sign), and $\lambda=\psi(x)$.
\end{Rem}

Finally, we prove that certain $T$-manifolds are Stein.

\begin{Prop}\label{Prop:1.1}
Let $X$ be a complex $T$-manifold such that the action $T:X$ is
effective, $Y$ a Stein manifold, and $\pi:X\rightarrow Y$ a
holomorphic $T$-invariant map. Suppose that there is an open
covering $Y_i,i\in I,$ of $Y$ by Stein submanifolds such that
$\pi^{-1}(Y_i)$ is a MF Hamiltonian Stein $T$-manifold and
$\pi:\pi^{-1}(Y_i)\rightarrow Y_i$ is the quotient map.  Then $X$
is Stein and $\pi:X\rightarrow Y$ is the quotient map.
\end{Prop}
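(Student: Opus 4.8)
The plan is to prove first that $X$ is Stein; the assertion that $\pi$ is the quotient map is then essentially formal. Indeed, once $X$ is known to be Stein, the categorical quotient $X\quo T$ exists by \cite{Snow}, and the $T$-invariant map $\pi$ factors through it; restricting over each $Y_i$, where $\pi$ already realizes the categorical quotient of $\pi^{-1}(Y_i)$, shows that this factorization is an isomorphism, so $\pi:X\to Y$ is the quotient map (and in particular $(\pi_*\Oo_X)^T=\Oo_Y$). So the whole content is Steinness of $X$.

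To prove that $X$ is Stein I would produce a smooth strictly plurisubharmonic exhaustion function and invoke Grauert's characterization of Stein manifolds. Fix a smooth strictly psh exhaustion $\rho$ of the Stein manifold $Y$; after a small perturbation its sublevel sets $Y^{(\nu)}:=\{\rho<\nu\}$ are relatively compact, Stein, Runge in $Y$, and exhaust $Y$. The pullback $\pi^{*}\rho$ is (weakly) psh on $X$, with sublevel sets the saturated open subsets $\pi^{-1}(Y^{(\nu)})$; these are not relatively compact because the fibres of $\pi$ are non-compact, so $\pi^{*}\rho$ must be corrected in the fibre directions. By Corollary \ref{Cor:1.1} applied to each $\pi^{-1}(Y_i)$ --- which by hypothesis is a MF Hamiltonian Stein $T$-manifold with $\pi$ as quotient map --- every point of $Y$ has arbitrarily small Stein neighbourhoods $U$ with $\pi^{-1}(U)$ Stein. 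Choose a locally finite refinement $\{U_j\}$ of $\{Y_i\}$ by such sets, smooth strictly psh exhaustions $\phi_j$ of the Stein manifolds $\pi^{-1}(U_j)$, and a partition of unity $\{\eta_j\}$ on $Y$ with $\operatorname{supp}\eta_j\Subset U_j$.

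The heart of the argument is to glue the $\phi_j$ with $\pi^{*}\rho$ into a single strictly psh exhaustion of $X$. The function $(\pi^{*}\eta_j)\phi_j$ extends smoothly by $0$ to all of $X$ (its support lies in $\pi^{-1}(\operatorname{supp}\eta_j)$, away from which $\phi_j$ is irrelevant), but it is not psh because of the cutoff. The standard remedy is to pass the $\phi_j$ through sufficiently convex, rapidly increasing functions $\kappa_j$ and to use a regularized-maximum construction: one checks that for a large constant $C$ the function $C\,\pi^{*}\rho+\sum_j(\pi^{*}\eta_j)\kappa_j(\phi_j)$ --- or a regularized maximum of translates of the $\kappa_j(\phi_j)$ over $C\,\pi^{*}\rho$ --- is smooth, strictly psh (strictness ``vertically'' coming from the $\phi_j$, which are strictly psh on the fibres, and ``horizontally'' from $C\,\pi^{*}\rho$ with $C$ large), and exhausting (on each $\pi^{-1}(Y^{(\nu)})$ only finitely many $U_j$ are relevant, each $\phi_j$ is proper on $\pi^{-1}(U_j)$, and $\rho$ is proper on $Y$). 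Grauert's theorem then gives that $X$ is Stein.

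The step I expect to be the main obstacle is precisely this gluing: the local vertical exhaustions $\phi_j$ blow up as one leaves $U_j$, i.e. along directions transverse to the fibres, so the error terms produced by the cutoffs $\eta_j$ and by the mismatch of the $\phi_j$ on overlaps $\pi^{-1}(U_i\cap U_j)$ are themselves unbounded and must be absorbed. This is where the hypotheses are genuinely used: all $U_j$ lie over the same map $\pi$, so on an overlap the $\phi_j$ are mutually comparable modulo functions pulled back from $Y$, and $Y$ being Stein lets one choose the convexifications $\kappa_j$ with the growth needed to dominate the errors on each sublevel set. As a fallback packaging of the same idea I would instead exhaust $X=\bigcup_\nu\pi^{-1}(Y^{(\nu)})$: using the standard fact that the saturated preimage of a Stein open under the quotient map of a reductive action on a Stein manifold is Stein, together with the $T$-weight decomposition $\Oo(\pi^{-1}(U))=\bigoplus_\chi\Gamma(U,\Oo_\chi)$ and Runge approximation for the coherent sheaves $\Oo_\chi$, one obtains that $\pi^{-1}(Y^{(\nu)})\hookrightarrow\pi^{-1}(Y^{(\nu+1)})$ is Runge; this reduces matters to a finite cover over a relatively compact Stein base, where the psh-gluing above is far easier to control, and the Behnke--Stein theorem (an increasing union of Stein open subsets with Runge inclusions is Stein) finishes the proof.
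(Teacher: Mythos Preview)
Your proof proposal takes a genuinely different route from the paper, and the route you sketch has a real gap at exactly the point you yourself flag.

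The paper's proof is much shorter and avoids plurisubharmonic gluing entirely. It verifies Steinness via the two classical conditions: holomorphic separation of points, and existence of an unbounded holomorphic function on any discrete sequence. Since $Y$ is Stein, both conditions reduce immediately to points lying over (or accumulating at) a single $y\in Y$. The key observation is that for every $\chi\in\X(T)$ the sheaf $\Oo_\chi$ on $Y$ is a line bundle (Lemma \ref{Lem:1.1}), so by Cartan's Theorem~A there is a global section $f_\chi\in H^0(Y,\Oo_\chi)\subset\Oo(X)$ with $f_\chi(y)\neq 0$. Such an $f_\chi$ then generates $\Oo_\chi$ over a neighbourhood of $y$. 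Passing to a model neighbourhood via Theorem \ref{Thm:1.1}, one has a coordinate system consisting of $T$-semiinvariant functions; each of them is a multiple of some $f_\chi$ by an invariant, so the global $T$-semiinvariants $f_\chi$ already separate points in the fibre and are unbounded on any discrete sequence there. That is the whole argument: the $T$-action is used not to build a psh exhaustion but to produce enough global holomorphic functions directly.

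Your approach, by contrast, tries to manufacture a strictly psh exhaustion by patching local ones, and you correctly identify the gluing as the obstacle --- but you do not actually overcome it. The sentence ``on an overlap the $\phi_j$ are mutually comparable modulo functions pulled back from $Y$'' is not justified: you chose the $\phi_j$ as arbitrary exhaustions of $\pi^{-1}(U_j)$, and nothing forces such a comparison. More seriously, the naive statement ``$Y$ Stein and $\pi^{-1}(U_i)$ Stein for a cover implies $X$ Stein'' is false in general (there are counterexamples of Forn\ae ss type), so any valid argument must use the $T$-structure in an essential way, not merely invoke convexification and partitions of unity. Your fallback does gesture at the weight decomposition $\Oo(\pi^{-1}(U))=\bigoplus_\chi\Gamma(U,\Oo_\chi)$ and Runge approximation for $\Oo_\chi$ --- which is precisely the mechanism the paper exploits --- but you then route it back into ``psh-gluing over a relatively compact base'', leaving the same unresolved step. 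If you pursue the weight-decomposition idea to its conclusion (global sections of $\Oo_\chi$ via Cartan~A, generating the local model coordinates), you arrive at the paper's proof and the psh machinery becomes unnecessary.
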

\begin{proof}
We need to check two conditions (see \cite{Onishchik}, Subsection
4.1):
\begin{enumerate}
\item For any different points $x_1,x_2\in X$ there is a holomorphic
function $f\in \Oo(X)$ with $f(x_1)\neq f(x_2)$.
\item For any discrete sequence $x_n,n=1,2,\ldots,$ of points of $X$ there is a
holomorphic function $f\in \Oo(X)$ such that the sequence $f(x_n)$
is unbounded.
\end{enumerate}
Since $Y$ is Stein, we may assume that $\pi(x_1)=\pi(x_2)=y$ in (1)
and that in (2) the sequence $\pi(x_n)$ converges to $y$. By
Cartan's Theorem A, for any $\chi\in \X(T)$ there is a section
$f_\chi\in H^0(Y,\Oo_\chi)\subset \Oo(X)$ such that $f_\chi(y)\neq
0$ (recall that the sheaf $O_\chi$ is locally free, see Lemma
\ref{Lem:1.1}). Let $i$ be such that $y\in Y_i$. In (2) we may
assume that $\pi(x_n)$ lies in $Y_i$ for all $n$. Replacing $Y_i$
with a smaller neighborhood, we obtain that $\pi^{-1}(Y_i)$ is a
saturated neighborhood of a model Hamiltonian $T$-manifold.

Let $x_1,x_2$ be such as in (1). Assume that
$f_\chi(x_1)=f_\chi(x_2)$ for any $\chi$. Since $f_\chi$ generates
the  $\Oo(Y'_i)$-module $H^0(Y'_i,\Oo_\chi)$, where $Y_i'$ is an
appropriate open neighborhood of $y$, we see that $f(x_1)=f(x_2)$
for any $T$-semiinvariant function on $\pi^{-1}(Y_i)$. However,
there is a coordinate system on a model manifold consisting of
$T$-semiinvariant functions. Contradiction.

Now let $x_n, n=1,2,\ldots,$ be such as in (2). Analogously to the
previous paragraph, we get that the sequence $f(x_n)$ is bounded for
any $T$-semiinvariant function $f$. As above, this is absurd.
\end{proof}

\section{The sheaf $\Aut$}\label{SECTION_Aut}
Let $T,X,\pi, \Y_X=(Y,\psi,\D,(c_i)_{i=1}^n, c_0)$ be such as above.
The goal of this section is to study the sheaf of groups $\Aut^X$ on
$Y$ defined as follows: the group $\Aut^X(U)$ consists of all
Hamiltonian morphisms of  $\pi^{-1}(U)$ preserving $\pi$. This sheaf
plays a crucial role in the subsequent development, compare with the
proof of the uniqueness part of the Delzant theorem in
\cite{Woodward}.

At first, we construct a certain morphism of sheafs
$\Oo_Y\rightarrow \Aut^X$.

To any function $f\in \Oo(U)$, where $U$ is an open subset of $X$,
we assign its skew-gradient $v(f)$  by
$$\omega_x(v(f),\eta)=\langle\eta,d_xf\rangle, x\in U, \eta\in T_xX. $$

\begin{Lem}\label{Lem:2.1}
Let $U$ be an open subset of $Y$ and $f\in \Oo(U)\cong
\Oo(\pi^{-1}(U))^T$. Then the map $t\mapsto \exp(2\pi i tv(f))$ is a
well-defined  homomorphism $\C\rightarrow \Aut^X(U)$ such that the
corresponding action $\C:\pi^{-1}(U)$ is holomorphic and its
velocity vector field coincides with $2\pi i v(f)$.
\end{Lem}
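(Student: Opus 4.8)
The plan is to analyze the flow of the vector field $2\pi i\, v(f)$ on $\pi^{-1}(U)$ and show it is complete, $T$-equivariant, Hamiltonian, and of period $1$ in the sense that the time-$1$ map is well-defined as an automorphism fixing the quotient. First I would note that since $f\in\Oo(\pi^{-1}(U))^T$ is $T$-invariant, its skew-gradient $v(f)$ is a $T$-invariant holomorphic vector field, and moreover it is tangent to the fibers of $\pi$: indeed, for $\xi\in\t$ one computes $\omega(v(f),\xi_*) = \langle\xi_*,df\rangle = \xi_* f = 0$ by invariance, so $v(f)$ is skew-orthogonal to all velocity vectors $\xi_*$; but in a MF Hamiltonian manifold a general orbit is lagrangian, so on the open dense set where the orbit through $x$ is lagrangian the skew-orthogonal complement of $\t_* x$ is exactly $\t_* x$, forcing $v(f)_x\in\t_* x$, and by continuity $v(f)$ is everywhere tangent to $T$-orbits, hence to fibers of $\pi$. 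Consequently any integral curve of $v(f)$ stays in a single $T$-orbit, in fact in a single fiber of $\pi$.

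Next I would reduce completeness and periodicity to the model case via Theorem~\ref{Thm:1.1}: it suffices to check the claim on saturated neighborhoods, and by that slice theorem every point with closed orbit has a saturated neighborhood Hamiltonian-isomorphic to an open piece of a model manifold $X'=T^*X_0$ of Example~\ref{Ex:1}. On a model manifold, the invariant functions are generated (over $\Oo(Y)$) by the coordinates $\theta_i^{\pm1},\beta^i, v_jv^j$, so $f$ is a function of $\beta^i$ and $v_jv^j$ (and $\theta_i$); an explicit computation of $v(f)$ in the Darboux-type coordinates $\theta_i,\beta^i,v_j,v^j$ with $\omega=-d\alpha$, $\alpha=\sum\beta^i d\ln\theta_i+\sum v_i dv^i$, shows that the flow of $v(f)$ acts on each factor by a rotation/scaling whose time-$2\pi i$ map is the identity — the key point being that the symplectic form has the $2\pi i$-periodic structure built into $d\ln\theta_i$ and the $v_iv^i$ pairing, exactly as for the torus action itself (this is the same computation that shows the $T=(\C^\times)^n$-action, generated by the $H_\xi$, closes up). Since along each fiber of $\pi$ the flow is a one-parameter subgroup of the torus $T$ acting on that fiber, completeness is automatic and the time-$t$ map is $\exp(2\pi i t v(f))$ with the asserted homomorphism property $\C\to\Aut^X(U)$.

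Finally I would verify the three structural properties of each time-$t$ map $g_t:=\exp(2\pi i t v(f))$. It is $T$-equivariant because $v(f)$ is $T$-invariant; it preserves $\pi$ because $v(f)$ is tangent to the fibers (so $\pi\circ g_t=\pi$); it preserves $\omega$ because $v(f)$ is a (locally) Hamiltonian vector field, $\mathcal L_{v(f)}\omega = d\iota_{v(f)}\omega = d(df)=0$; and it preserves the moment map since $g_t$ is $T$-equivariant, fixes $\pi$, and $\mu=\psi\circ\pi$. Hence $g_t\in\Aut^X(U)$, $t\mapsto g_t$ is a homomorphism $\C\to\Aut^X(U)$, the induced action $\C:\pi^{-1}(U)$ is holomorphic (smooth dependence on $t$ is clear from the ODE with holomorphic right-hand side, once completeness is known), and its velocity field is by construction $2\pi i\, v(f)$. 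The main obstacle is the completeness-and-periodicity step: a priori the flow of $v(f)$ need not be complete and need not have period $1$, and both facts rely essentially on reducing to the model via the symplectic slice theorem and on the specific normalization $\exp(2\pi i\,\cdot)$ matched to the $(\C^\times)^n$-structure; everything else is a routine check.
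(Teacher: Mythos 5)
Your overall mechanism --- the flow of $2\pi i\,v(f)$ is realized fiberwise by elements of $T$, hence is complete and lands in $\Aut^X(U)$ --- is the same as the paper's, and your checks of $T$-equivariance, of $\mathcal{L}_{v(f)}\omega=0$, of $\pi\circ g_t=\pi$ and of preservation of $\mu=\psi\circ\pi$ are fine. But the step you use to get there has a genuine gap. The inference ``$v(f)_x\in\t_*x$ on the dense lagrangian locus, hence everywhere by continuity'' is not valid: tangency to a family of orbits does not pass to a lower-dimensional limit orbit (already for $\C^\times$ acting on $\C^2$ by $t.(a,b)=(ta,b)$, the field $\partial_a$ is tangent to every one-dimensional orbit but not to the fixed points). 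Moreover, even granting pointwise tangency, that alone does not support your completeness argument: what you need is the stronger statement that $v(f)_x=\phi(\pi(x))_*x$ for a single holomorphic $\t$-valued function $\phi$ on $U$, constant along each fiber (including the non-closed orbits inside a fiber); at points with positive-dimensional stabilizer the coefficient in $\t$ is not unique, so this does not follow formally from tangency. The paper obtains exactly this: where the action is free (in general, on $X^\zeta\cap\pi^{-1}(U)$), the $T$-invariance of $v(f)$ lets one view it as a holomorphic $\t$-valued function on $U$, i.e.\ a section of $\t_Y$; exponentiating gives a homomorphism $\C\rightarrow T_Y(U)$, and since $T_Y(U)$ acts on all of $\pi^{-1}(U)$, the $\C$-action extends, with holomorphic velocity field agreeing with $2\pi i\,v(f)$ on the dense subset $X^\zeta\cap\pi^{-1}(U)$ and hence everywhere. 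Your alternative route --- the explicit computation on a model chart, which does show $v(f)=\phi(\pi(\cdot))_*$ with $\phi$ depending only on the invariant coordinates $\beta^i$, $v_jv^j$ --- would also close the gap, but you only gesture at it and draw the wrong conclusion from it, so as written the central step is not established.

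Second, the ``periodicity'' you emphasize is both unnecessary and false. The lemma does not claim that the flow closes up; it only claims that the flow of $2\pi i\,v(f)$ is complete, so that $t\mapsto\exp(2\pi i t v(f))$ is a homomorphism $\C\rightarrow\Aut^X(U)$. For a general invariant $f$ the time-one map $\exp(2\pi i v(f))$ is a nontrivial element of $\Aut^X(U)$: by Lemmas \ref{Lem:2.2} and \ref{Lem:2.3} these maps exhaust $\Aut^X(U)$, and $\exp(2\pi i v(f))=\operatorname{id}$ only when $f\in\C\oplus\X(T)^*$, so your claim that the ``time-$2\pi i$ map is the identity, exactly as for the torus action'' holds only for $f=H_\xi$ with $\xi\in\X(T)^*$ (up to constants). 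Fortunately no periodicity is needed: once $v(f)_x=\phi(\pi(x))_*x$ is established, the flow is $x\mapsto\exp(2\pi i t\phi(\pi(x))).x$, defined for all $t\in\C$ because the $T$-action is globally defined --- which is what your ``one-parameter subgroup of $T$ acting on the fiber'' remark correctly captures.
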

\begin{proof}
Define two constant sheafs $\t_Y,T_Y$ on $Y$ with fibers $\t,T$,
respectively. There is the natural epimorphism $\t_Y\rightarrow T_Y,
\xi\mapsto \exp(2\pi i\xi),$ and the natural action
$T_Y(U):\pi^{-1}(U), \varphi.x=\varphi(\pi(x))x$ by holomorphic
$T$-equivariant automorphisms preserving $\pi$.

At first, suppose that the action $T:\pi^{-1}(U)$ is free. Note
that the vector field $v(f)$ is $T$-invariant and tangent to all
$T$-orbits (the latter stems from $\omega(v(f),\xi_*)=0$ for any
$\xi\in\t$). So we may consider $v(f)$ as a section of  $\t_Y$.
Applying the exponential map to $v(f)$, we get the one-parameter
subgroup $\exp(2\pi i t v(f))$ of $T_Y(U)$. So we get the action
$\C:\pi^{-1}(U)$, whose velocity vector field is $2\pi i v(f)$.
Since $v(f)$ is a Hamiltonian vector field, this action preserves
$\omega$.  Finally,  the action preserves $\pi$ and thus also
$\mu$.

Consider the general case. Choose general $\zeta\in
\X(T)\otimes_\Z\R$. It follows directly from the definition of
$X^\zeta$ that $\pi^{-1}(U)\cap X^\zeta$ is stable with respect to
the action $T_Y(U):\pi^{-1}(U)$. The action $\C:X^\zeta\cap
\pi^{-1}(U)$ constructed above is factorized through a homomorphism
$\C\rightarrow T_Y(U)$. Since $T_Y(U)$ acts on $\pi^{-1}(U)$ by
holomorphic automorphisms, we see that the action
$\C:\pi^{-1}(U)\cap X^\zeta$ can be extended to the whole set
$\pi^{-1}(U)$.
\end{proof}

So we have the sheaf morphism $\Oo_Y\rightarrow \Aut$ given by
\begin{equation}\label{eq:2.1}
f\mapsto \exp(2\pi i v(f)).
\end{equation}

\begin{Lem}\label{Lem:2.2}
The morphism of sheafs (\ref{eq:2.1}) is surjective.
\end{Lem}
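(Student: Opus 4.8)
The plan is to work locally near a point $y\in Y$ whose fiber contains a point $x$ with closed orbit, so that by Theorem \ref{Thm:1.1} we may assume $X$ is a model Hamiltonian $T$-manifold with $y$ the image of its base point; surjectivity of a sheaf morphism is a local statement, so this reduction suffices. Fix $\varphi\in\Aut^X(U)$ for a small saturated neighborhood $\pi^{-1}(U)$ of $x$. The first step is to understand $\varphi$ on the open set $X^\zeta$ of Lemma \ref{Lem:1.2}: there $\pi$ is a principal $T$-bundle, so $\varphi|_{X^\zeta\cap\pi^{-1}(U)}$ is given by $x\mapsto g(\pi(x))\cdot x$ for a unique holomorphic map $g:U^\zeta\to T$, where $U^\zeta=\pi(X^\zeta\cap\pi^{-1}(U))$. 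Since $\varphi$ is Hamiltonian, it preserves $\mu=\psi\circ\pi$, which is automatic, and it preserves $\omega$; the condition $\varphi^*\omega=\omega$ will translate, via the explicit form $\omega=-d\alpha$ with $\alpha=\sum\beta^i d\ln\theta_i+\sum v_i dv^i$ from Example \ref{Ex:1}, into a constraint on $g$.

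The key step is to show that $g$ is the exponential of a skew-gradient. Writing $g(\pi(x))=\exp(2\pi i\, \xi(\pi(x)))$ locally for a holomorphic $\t$-valued function $\xi$ on (a cover of) $U^\zeta$, I would compute $\varphi^*\alpha-\alpha$ and use $\varphi^*\alpha(\eta_*)=\alpha(\eta_*)=H_\eta$ together with $d(\varphi^*\alpha-\alpha)=0$ to deduce that $\varphi^*\alpha-\alpha=\pi^*(df)$ for a holomorphic function $f$ on $U^\zeta$ (the pull-back from $Y$ is because $\varphi^*\alpha-\alpha$ is $T$-invariant and kills all $\eta_*$, exactly as in the uniqueness argument in the proof of Lemma \ref{Lem:1.1}; closedness and the fact that $U^\zeta$ can be taken Stein and simply connected give the potential $f$). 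A direct check in the model coordinates then shows that acting by $\exp(2\pi i\, v(f))$ has the same effect on $X^\zeta$ as $\varphi$, i.e.\ $\varphi = \exp(2\pi i\, v(f))$ on $X^\zeta\cap\pi^{-1}(U)$; here one uses that for a $T$-invariant $f$ the skew-gradient $v(f)$ is tangent to the orbits and that in the principal-bundle locus the flow of $v(f)$ is precisely translation by $\exp(2\pi i\, \xi)$ with $d\xi$ matching $df$ appropriately.

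Finally, since $X^\zeta\cap\pi^{-1}(U)$ is dense in $\pi^{-1}(U)$ and both $\varphi$ and $\exp(2\pi i\, v(f))$ are holomorphic automorphisms of $\pi^{-1}(U)$ (the latter is globally defined on all of $\pi^{-1}(U)$ by Lemma \ref{Lem:2.1}), they agree everywhere by continuity. It remains to check that the potential $f$, a priori only defined on $U^\zeta$, extends holomorphically across the removed divisors to all of $U$: this follows because $\exp(2\pi i\, v(f))$ extends (so the relevant data is bounded near the divisors) and $U\setminus U^\zeta$ has codimension one with $U$ normal, giving a Hartogs-type extension; alternatively one sees directly from the model that $f$ is a polynomial in the coordinates $\beta^i, v_iv^i$. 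I expect the main obstacle to be the bookkeeping in the second step — extracting the Hamiltonian $f$ from the relation $\varphi^*\omega=\omega$ and verifying that $\exp(2\pi i\, v(f))$ reproduces $\varphi$ rather than some twist of it — since this is where the precise normalization $\alpha(\xi_*)=H_\xi$ and the non-homomorphism subtlety flagged after \eqref{eq:2.1} must be handled carefully.
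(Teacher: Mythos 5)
Your overall strategy is the same as the paper's (restrict to the free locus $X^\zeta$, where $\pi$ is a principal $T$-bundle; write the automorphism as $x\mapsto \exp(2\pi i\,\xi(\pi(x)))\cdot x$ on a small simply connected $U$; turn $\varphi^*\omega=\omega$ into a closedness condition on the base; integrate to a potential $f\in\Oo(U)$; conclude on all of $\pi^{-1}(U)$ by Lemma \ref{Lem:2.1} and density of $X^\zeta\cap\pi^{-1}(U)$). But the key identification in your second step is wrong as written: the potential of $\varphi^*\alpha-\alpha$ is \emph{not} the Hamiltonian generating $\varphi$. In the coordinates of Example \ref{Ex:1} with $X=T^*(T)$ and $\alpha=\sum_i\beta^i\,d\ln\theta_i$, one computes $\varphi^*\alpha-\alpha=2\pi i\sum_i\beta^i\,d\xi_i$, whereas the function $f$ with $\exp(2\pi i\,v(f))=\varphi$ must satisfy $\partial f/\partial\beta^i=\xi_i$, i.e.\ $df=\sum_i\xi_i\,d\beta^i$; the two potentials differ by the operator $f\mapsto 2\pi i\bigl(\sum_i\beta^i\partial f/\partial\beta^i-f\bigr)$ (exactly the expression that later drives Lemma \ref{Lem:3.2}). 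Concretely, if $\varphi$ is translation by a fixed $t_0\in T$ then $\varphi^*\alpha=\alpha$, so your $f$ is constant and $\exp(2\pi i\,v(f))=\id\neq\varphi$; and for $n=1$, $\xi=\beta$, your $f=\pi i\beta^2$ yields $\theta\mapsto\theta e^{-4\pi^2\beta}$ instead of $\theta\mapsto\theta e^{2\pi i\beta}$. So the promised ``direct check in model coordinates'' would fail. The repair stays inside your scheme and is what the paper does: $d(\varphi^*\alpha-\alpha)=0$ is equivalent to $\partial\xi_i/\partial\beta^j=\partial\xi_j/\partial\beta^i$, i.e.\ to closedness of $\sum_i\xi_i\,d\beta^i$; take $f$ with $\partial f/\partial\beta^i=\xi_i$ on the polydisk $U$, and then $v(f)=\sum_i\xi_i\,\theta_i\partial/\partial\theta_i$ reproduces $\varphi$ on the nose (up to adding a constant and an element of $\X(T)^*$, which only affects the kernel described in Lemma \ref{Lem:2.3}).

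A smaller point: your last step is solving a non-problem. By assertion 3 of Lemma \ref{Lem:1.2}, every fiber of $\pi$ meets $X^\zeta$, so $U^\zeta:=\pi(X^\zeta\cap\pi^{-1}(U))$ equals $U$; the divisors are not removed from the quotient (only the non-free orbits are removed upstairs), hence $f$ is defined on all of $U$ from the start and no Hartogs-type extension is needed. (As an aside, codimension-one complements would not give such an extension anyway without the boundedness you invoke, so it is fortunate the issue is vacuous.) With the corrected choice of $f$ and this simplification, your argument coincides with the paper's proof.
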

\begin{proof}
Again, choose an element $\zeta\in \X(T)\otimes_\Z\R$ general for
$X$ and an open subset $U\subset Y$.  We get the natural inclusion
$\Aut^X(U)\hookrightarrow \Aut^{X^\zeta}(U)$ induced by the
restriction to $X^\zeta\cap \pi^{-1}(U)$. So we may replace $X$ with
$X^\zeta$ and assume that the action $T:X$ is free. Since the
question is local we may assume $X=T^*(T)$ and $U$ is given by (in
the notation of Example \ref{Ex:1}) $|\beta^i|<1, i=\overline{1,n}$.
We need to prove that the map $\Oo(U)\rightarrow \Aut^X(U)$ is
surjective.

The group $\Aut^X(U)\hookrightarrow T_Y(U)$ consists of all maps
$\Phi:U\rightarrow T$ such that the map $(t,y)\mapsto (\Phi(y)t,y)$
preserves $\omega$. Fix a basis $\theta_1,\ldots,\theta_n$ of
$\X(T)$. There is a holomorphic map
$\varphi=(\varphi_1,\ldots,\varphi_n):U\rightarrow \t$ such that
$\Phi=\exp(2\pi i \varphi)$. The inclusion $\Phi\in \Aut^X(U)$ holds
whenever the vector field $\sum_{i=1}^n \varphi_i
\theta_i\frac{\partial}{\partial \theta_i}$ is symplectic. The
latter is equivalent to the system of equations
$$\frac{\partial\varphi_i}{\partial
\beta^j}=\frac{\partial\varphi_j}{\partial \beta^i},
i,j=\overline{1,n}.$$ By the Dolbeaux lemma, there is $f\in \Oo(U)$
such that $\varphi_i=\frac{\partial f}{\partial \beta^i}$. It
follows that $v(f)=\varphi$.
\end{proof}

\begin{Lem}\label{Lem:2.3}
The kernel of the epimorphism (\ref{eq:2.1}) is $\C\oplus \X(T)^*$
(where $\C$ denotes the sheaf of constant functions and $\X(T)^*$
the sheaf of $H_\xi, \xi\in \X(T)^*$).
\end{Lem}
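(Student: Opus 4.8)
The plan is to run the same reduction to the free locus $X^\zeta$ that was used in the proofs of Lemmas \ref{Lem:2.1} and \ref{Lem:2.2}, and then to pin down exactly which holomorphic functions on $Y$ have a skew-gradient that exponentiates to the identity. Throughout, the kernel of (\ref{eq:2.1}) over $U$ is $\{f\in\Oo(U)\mid \exp(2\pi i v(f))=\id \text{ on }\pi^{-1}(U)\}$.

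The algebraic fact driving everything is that $v(H_\xi)=\xi_*$ for every $\xi\in\t$: for $x\in X$ and $v\in T_xX$ one has $\omega_x(v(H_\xi),v)=\langle v,d_xH_\xi\rangle=\langle d_x\mu(v),\xi\rangle=\omega_x(\xi_*x,v)$, and $\omega$ is nondegenerate. Identify $\X(T)^*$ with $\ker(\t\to T,\ \xi\mapsto\exp(2\pi i\xi))$, viewed inside $\t$. Then for $\xi\in\X(T)^*$ the (constant) section $v(H_\xi)=\xi_*$ of $\t_Y$ exponentiates to the action of $\exp(2\pi i\xi)=1\in T$, so $\exp(2\pi i v(H_\xi))=\id$; since $v(c)=0$ for locally constant $c$ and $v$ is additive, also $\exp(2\pi i v(c+H_\xi))=\id$. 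Hence $\C\oplus\X(T)^*$ is contained in the kernel. The sum is already a direct sum inside $\Oo(U)$: if $H_\xi$ is locally constant then $dH_\xi=0$, so $\xi_*=v(H_\xi)=0$, so $\xi=0$ because $T$ acts effectively on $X$ (a cocharacter whose velocity field vanishes on the nonempty open set $\pi^{-1}(U)$ acts trivially on $X$); thus $\C\cap\X(T)^*=0$ and $H_\xi=H_{\xi'}$ forces $\xi=\xi'$.

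For the reverse inclusion, let $f\in\Oo(U)$ with $\exp(2\pi i v(f))=\id$ on $\pi^{-1}(U)$, and fix $\zeta\in\X(T)\otimes_\Z\R$ general for $X$. On $X^\zeta\cap\pi^{-1}(U)$ the action is free and, as in the proof of Lemma \ref{Lem:2.1}, $v(f)$ is a section of $\t_Y$ and $\exp(2\pi i v(f))$ is the element $y\mapsto\exp(2\pi i\, v(f)(y))$ of $T_Y(U)$; triviality then forces $\exp(2\pi i\,v(f)(y))=1$, i.e.\ $v(f)(y)\in\X(T)^*$, for all $y$. Since $v(f)$ is holomorphic and $\X(T)^*\subset\t$ is discrete, $v(f)$ is locally constant, say $v(f)\equiv\xi\in\X(T)^*$, so $v(f-H_\xi)=0$ on $X^\zeta\cap\pi^{-1}(U)$, whence $d(f-H_\xi)=0$ there. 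Because $X^\zeta$ is open and dense in $X$ (its complement is a proper analytic subset, by Theorem \ref{Thm:1.1} and the description of $X^\zeta$ in the proof of Lemma \ref{Lem:1.2}) and $f,H_\xi$ are holomorphic and $T$-invariant, $d(f-H_\xi)=0$ on all of $\pi^{-1}(U)$, so $f=H_\xi+c$ with $c\in\C(U)$, i.e.\ $f\in(\C\oplus\X(T)^*)(U)$.

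I do not expect a genuine obstacle: once $v(H_\xi)=\xi_*$ and the identification $\ker(\t\to T)=\X(T)^*$ are in place, the computation is essentially forced. The only two points requiring care are the passage between $X$ and the dense open set $X^\zeta$ (handled exactly as in the two preceding lemmas, via the identity theorem for the holomorphic automorphism $\exp(2\pi i v(f))$ and for the functions $f-H_\xi$) and the verification that the kernel is an \emph{internal} direct sum, which as above reduces to effectiveness of the $T$-action.
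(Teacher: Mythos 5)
Your proof is correct and follows essentially the same route as the paper: as in Lemmas \ref{Lem:2.1} and \ref{Lem:2.2} you reduce to the free locus $X^\zeta$, and there identify the kernel using $v(H_\xi)=\xi_*$ together with discreteness of the lattice $\X(T)^*\subset\t$, which is exactly the content of the paper's ``check directly for $X=T^*(T)$''. The only difference is presentational: you carry out that direct check invariantly (including the density argument extending $d(f-H_\xi)=0$ from $X^\zeta\cap\pi^{-1}(U)$ to all of $\pi^{-1}(U)$ and the verification that the sum $\C\oplus\X(T)^*$ is direct via effectiveness), details the paper leaves implicit.
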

\begin{proof}
Again, it is enough to check this lemma for $X=T^*(T)$, where it is
checked directly.
\end{proof}

Finally, $\Aut^X\cong \Oo_Y/(\C\oplus \X(T)^*)$. In particular,
$\Aut^X$ depends only on $Y$ and $\psi:Y\rightarrow \t^*$, so we
write $\Aut$ instead of $\Aut^X$.

\begin{Cor}\label{Cor:2.4}
$H^j(Y,\Aut)=H^{j+1}(Y,\C\oplus\X(T)^*)$.
\end{Cor}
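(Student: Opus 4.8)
The plan is to read off the statement from the long exact cohomology sequence of the short exact sequence of sheaves of abelian groups on $Y$
$$0 \longrightarrow \C\oplus\X(T)^* \longrightarrow \Oo_Y \longrightarrow \Aut \longrightarrow 0,$$
whose exactness is precisely the content of the two preceding lemmas: the morphism $\Oo_Y\to\Aut$ of (\ref{eq:2.1}) is an epimorphism by Lemma \ref{Lem:2.2}, its kernel is the (constant) subsheaf $\C\oplus\X(T)^*\subset\Oo_Y$ by Lemma \ref{Lem:2.3}, and the inclusion of this subsheaf into $\Oo_Y$ is tautological. Note that $\Aut$, being isomorphic to a quotient of the abelian sheaf $\Oo_Y$, is itself a sheaf of abelian groups, so its higher cohomology is defined in the usual derived-functor sense and the long exact sequence applies.

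First I would invoke Lemma \ref{Lem:1.1} to recall that $Y$ is a Stein manifold. Hence, by Cartan's Theorem B, the (coherent) structure sheaf has vanishing higher cohomology: $H^j(Y,\Oo_Y)=0$ for all $j\geqslant 1$. Plugging this into the long exact sequence
$$\cdots\to H^j(Y,\Oo_Y)\to H^j(Y,\Aut)\to H^{j+1}(Y,\C\oplus\X(T)^*)\to H^{j+1}(Y,\Oo_Y)\to\cdots,$$
the two flanking terms vanish for every $j\geqslant 1$, so the connecting homomorphism $H^j(Y,\Aut)\xrightarrow{\ \sim\ } H^{j+1}(Y,\C\oplus\X(T)^*)$ is an isomorphism; this is the asserted equality. (For $j=0$ the sequence only yields $H^0(Y,\Aut)/\image(\Oo(Y))\cong H^1(Y,\C\oplus\X(T)^*)$, since $\Oo(Y)\to\Aut(Y)$ need not be trivial; it is the range $j\geqslant 1$ that is used in the sequel, in particular for $H^1(Y,\Aut)$.)

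I do not expect a genuine obstacle: the entire content is the vanishing $H^{\geqslant 1}(Y,\Oo_Y)=0$, immediate from Steinness of $Y$, combined with the exact sequence already produced in this section. The only point needing a word of care is the remark that $\Aut$ is abelian, which permits the long exact sequence; this is automatic from the identification $\Aut\cong\Oo_Y/(\C\oplus\X(T)^*)$ obtained just above.
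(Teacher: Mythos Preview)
Your proof is correct and follows exactly the paper's approach: invoke the vanishing $H^{j}(Y,\Oo_Y)=0$ for $j>0$ from Steinness of $Y$ (Cartan's Theorem~B) and read off the isomorphism from the long exact sequence attached to $0\to\C\oplus\X(T)^*\to\Oo_Y\to\Aut\to 0$. Your parenthetical remark that the $j=0$ case only yields a cokernel statement is a fair observation about the Corollary as stated, though only $j\geqslant 1$ is needed in the sequel.
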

\begin{proof}
Since $Y$ is Stein, we have $H^j(Y,\Oo_Y)=0$ for $j>0$. It remains
to consider the long exact sequence associated with $0\rightarrow
\C\oplus \X(T)^*\rightarrow \Oo_Y\rightarrow \Aut\rightarrow 0$.
\end{proof}

\section{Uniqueness}\label{SECTION_uniqueness}
The goal of this section is to prove Theorem \ref{Thm:1}. Throughout
this section $X$ is a MF Hamiltonian Stein $T$-manifold and
$\Y_X=(Y,\psi, \D, (c_i)_{i=1}^n, c_0)$ the corresponding 5-tuple.

Denote by $\XX$ the set of all isomorphism classes of multiplicity
free Hamiltonian Stein $T$-manifolds with 5-tuples of the form
$(Y,\psi,\D,\bullet,\bullet)$. Two such Hamiltonian $T$-manifolds
$X,X'$ are supposed to be isomorphic if there is a Hamiltonian
isomorphism $\iota:X\rightarrow X'$  such that $\pi'\circ\iota=\pi$,
where $\pi,\pi'$ are the quotient maps for $X,X'$.

There is a natural  action of $H^1(Y,\Aut)$ on $\XX$, which we
describe now.

Choose $c\in H^1(Y,\Aut)$. Let $Y_i,i\in I,$ be an open covering and
$\varphi_{ij}\in \Aut(Y_{ij}),$ where $ Y_{ij}:=Y_i\cap Y_j$, be a
1-cocycle representing $c$. Set
$$X':=\coprod_{i\in I} \pi^{-1}(Y_i)/\sim,$$ where points $x_i\in
\pi^{-1}(Y_i), x_j\in \pi^{-1}(Y_j)$ are equivalent if they both
lie in $\pi^{-1}(Y_{ij})$  and $x_i=\varphi_{ij}(x_j)$. Since
$(\varphi_{ij})$ is a 1-cocycle, we see that $\sim$ is a genuine
equivalence relation. Clearly, $X'$ has a unique structure of a
Hamiltonian $T$-manifold such that the embedding
$\pi^{-1}(Y_i)\rightarrow X$ is a Hamiltonian morphism. It follows
from Proposition \ref{Prop:1.1} that $X'$ is Stein.

It is clear that if $\varphi'_{ij}$ is another 1-cycle and $X''$
is obtained by applying $\varphi'_{ij}$ to $X'$, then $X''$ is
obtained from $X$ by applying $\varphi'_{ij}\varphi_{ij}$. Now
suppose $\varphi_{ij}$ is a 1-coboundary, that is, there is a
1-cochain $f_i\in \Aut(Y_i)$ with $\varphi_{ij}=f_if_j^{-1}$. Then
there is the isomorphism $X\rightarrow X'$ given by $f_i$ on
$Y_i$. So $X'$ depends up to isomorphism only on $c$ and we write
$X_c$ for $X'$. Also we note that $X_{c_1c_2}=(X_{c_1})_{c_2}$, so
we do have an action of $H^1(Y,\Aut)$ on $\XX$.

The following proposition is the main property of this action.

\begin{Prop}\label{Prop:3.3}
Suppose $\XX$ is nonempty. Then the action of $H^1(Y,\Aut)$ on $\XX$
is free and transitive. Further, for any $(c'_i)_{i=0}^n\in
H^2(\C\oplus \X(T)^*)$ there is a unique element $X'\in \XX$ with
$\Y_X=(Y,\psi, \D,(c'_i)_{i=1}^n,c'_0)$.
\end{Prop}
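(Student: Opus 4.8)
The plan is to prove the three assertions of Proposition \ref{Prop:3.3} in sequence, using the cohomological description $\Aut\cong \Oo_Y/(\C\oplus\X(T)^*)$ from Section \ref{SECTION_Aut} as the main tool.

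\smallskip

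\textbf{Freeness of the action.} Suppose $X_c\cong X$ in $\XX$, i.e.\ there is a Hamiltonian isomorphism $\iota\colon X\to X_c$ commuting with the quotient maps. Pick a covering $Y_i$ and a cocycle $(\varphi_{ij})$ representing $c$ such that $X_c$ is glued from the $\pi^{-1}(Y_i)$ via the $\varphi_{ij}$. Restricting $\iota$ to each $\pi^{-1}(Y_i)$ and composing with the gluing embedding $\pi^{-1}(Y_i)\hookrightarrow X$ used in the construction of $X_c$, one obtains elements $f_i\in\Aut(Y_i)$; the compatibility of $\iota$ with the identifications forces $\varphi_{ij}=f_if_j^{-1}$ on $Y_{ij}$, so $c$ is a coboundary and $c=0$. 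This is essentially a routine unwinding of definitions, the only subtlety being to check that $f_i$ really lands in $\Aut(Y_i)$ (it preserves $\pi$ and is Hamiltonian because both $\iota$ and the gluing maps are).

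\smallskip

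\textbf{Transitivity.} This is the heart of the argument and the step I expect to be the main obstacle. Given $X,X'\in\XX$ with the same $(Y,\psi,\D)$, I want to produce $c\in H^1(Y,\Aut)$ with $X'\cong X_c$. The idea is to build a covering $Y_i$ of $Y$ together with Hamiltonian isomorphisms $g_i\colon\pi^{-1}(Y_i)\xrightarrow{\sim}\pi'^{-1}(Y_i)$ over $Y_i$. Existence of such local isomorphisms is exactly the content of the symplectic slice theorem \ref{Thm:1.1} together with Remark \ref{Rem:1.1}: near any point with closed orbit, both $X$ and $X'$ are locally Hamiltonian-isomorphic to the model manifold determined by $T_0,\lambda,\chi_1,\dots,\chi_k$, and by Remark \ref{Rem:1.2} this data is read off from $(Y,\psi,\D)$ alone (at a free point one uses instead Lemma \ref{Lem:1.2}, which identifies $X^\zeta\to Y$ with a principal $T$-bundle, plus the fact that $\Oo_\chi$ determines the bundle — but this requires care, see below). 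Then $\varphi_{ij}:=g_i^{-1}\circ g_j'$... more precisely, setting $\varphi_{ij}:=g_j\circ g_i^{-1}\in\Aut^{X'}(Y_{ij})=\Aut(Y_{ij})$ after transporting everything to one side, the $(\varphi_{ij})$ form a $1$-cocycle whose class $c$ satisfies $X_c\cong X'$ by construction. The delicate point in arranging the local isomorphisms is at free points: two manifolds in $\XX$ can differ in the line bundles $\Oo_\chi$, so one must first reduce to the case where these agree, or else absorb the discrepancy into the cocycle. Concretely, the torsor structure on principal $T$-bundles trivializing locally is what manufactures $\Aut$-valued transition data, so this should go through once one observes that over a small enough $Y_i$ everything trivializes.

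\smallskip

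\textbf{Realizing the remaining invariants.} For the last assertion, by Corollary \ref{Cor:2.4} we have $H^1(Y,\Aut)\cong H^2(Y,\C\oplus\X(T)^*)$, and since the action is now known to be free and transitive, $\XX$ is a torsor over $H^2(Y,\C\oplus\X(T)^*)=H^2(Y,\C)\oplus H^2(Y,\X(T)^*)$. It remains to show that the map $X'\mapsto((c_i')_{i=1}^n,c_0')$ is, under this torsor identification, the obvious affine-linear map, i.e.\ that changing $X$ by $c\in H^1(Y,\Aut)$ changes $(c_i)_i$ by the image of $c$ in $H^2(Y,\X(T)^*)=H^2(Y,\Z)^{\oplus n}$ (via the chosen basis $\chi_1,\dots,\chi_n$) and $c_0$ by the image of $c$ in $H^2(Y,\C)=H^2_{DR}(Y)$. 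This is a Čech-cohomological bookkeeping computation: the gluing cocycle $\varphi_{ij}=\exp(2\pi i v(f_{ij}))$ with $f_{ij}\in\Oo(Y_{ij})$ acts on the transition functions of $\Oo_{\chi_r}$ by the derivative of $\chi_r$ applied to $v(f_{ij})$, and on the form $\omega_0$ by $d$ of the corresponding $1$-form, which are precisely the connecting-homomorphism descriptions of the components $H^1(Y,\Aut)\to H^2(Y,\X(T)^*)$ and $H^1(Y,\Aut)\to H^2(Y,\C)$ coming from $0\to\C\oplus\X(T)^*\to\Oo_Y\to\Aut\to 0$. Once this identification of maps is in place, uniqueness and existence of $X'$ with prescribed $(c_i')_{i=1}^n,c_0'$ are immediate from the torsor property, because the map $\XX\to H^2(Y,\C\oplus\X(T)^*)$ is then an isomorphism of torsors.
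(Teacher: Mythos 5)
Your proposal follows essentially the same route as the paper: transitivity is obtained from local Hamiltonian isomorphisms over a covering of $Y$ supplied by the slice theorem (Theorem \ref{Thm:1.1} together with Remarks \ref{Rem:1.1}, \ref{Rem:1.2} — note every fiber contains a closed orbit, so no separate treatment of free points is needed), with the transition maps giving the $\Aut$-cocycle, and the remaining assertion comes from computing how twisting by $c\in H^1(Y,\Aut)\cong H^2(Y,\C\oplus\X(T)^*)$ shifts $(c_i)_{i=1}^n$ and $c_0$, which is exactly the content of the paper's Lemmas \ref{Lem:3.1} and \ref{Lem:3.2} that you compress into ``\v{C}ech bookkeeping'' (your explicit coboundary argument for freeness is fine; the paper leaves freeness implicit, as it also follows from those two lemmas). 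The only caveat is that the shift of $c_0$ is not purely formal: it rests on the computation $\alpha_i-\varphi_{ij}^{*-1}\alpha_i=2\pi i\, d\bigl(\sum_k\beta^k\,\partial f_{ij}/\partial\beta^k-f_{ij}\bigr)$, whose \v{C}ech coboundary isolates the constant part of $\delta f$; this is also what would justify your stronger ``diagonal'' claim that the $\X(T)^*$-component of $c$ does not move $c_0$, whereas the paper proves and needs only the triangular version (Lemma \ref{Lem:3.2} for $c\in H^2(Y,\C)$).
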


The proof of this proposition will be given at the end of the
section after two auxiliary lemmas. Now we are going to derive
Theorem \ref{Thm:1} from Proposition \ref{Prop:3.3}.

\begin{proof}[Proof of Theorem \ref{Thm:1}]
Let $X'$ be a MF Hamiltonian Stein $T$-manifold and
$\varphi:\Y=(Y,\psi,\D,$ $(c_i)_{i=1}^n,c_0)\rightarrow
\Y_{X'}=(Y',\psi',\D',(c'_i)_{i=1}^n,c_0')$ be an arbitrary
morphism of $5$-tuples. Set $X=Y\times_{Y'}X'$ and let
$\widetilde{\psi}$ denote the map $X\rightarrow X'$ arising
from the Cartesian square. There is a unique structure of a
Hamiltonian $T$-manifold on $X$ such that $\widetilde{\psi}$ is a
Hamiltonian morphism. Being a closed submanifold in $Y\times X'$,
the manifold $X$ is Stein. The natural map $\pi:X\rightarrow Y$ is
the quotient map. It is clear from construction that $\Y_X=\Y$.

The previous construction reduces the proof of the theorem to the
case $\varphi=id$. This case stems from Proposition \ref{Prop:3.3}.
\end{proof}

Let $X'$ be as above,  $\pi'$ denote the quotient map $X'\rightarrow
Y$, and $\Y_{X'}=(Y,\psi',\D',(c'_{i})_{i=1}^n, c'_0)$. It is clear
from the construction of $X'$ that $\psi'=\psi, \D'=\D$. Let us
examine the behavior of $c_i'$. We recall that $H^1(Y,\Aut)\cong
H^2(X,\C\oplus \X(T)^*)$ (Corollary \ref{Cor:2.4}).

Recall also that there is a natural embedding of sheafs
$\Aut\hookrightarrow T_Y$, see the proof of Lemma \ref{Lem:2.1}.
So one can consider the homomorphism $\varphi\mapsto
\langle\chi,\varphi\rangle$ from $\Aut$ to the constant sheaf with
fiber $\C^\times$.

\begin{Lem}\label{Lem:3.1}
Choose $\chi\in \X(T)$ and let $c_\chi,c_\chi'$ be the classes of
the line bundles $\Oo_\chi, \Oo'_\chi$ corresponding to $X,X'$ in
$H^2(Y,\C)$. Let $c\in H^1(Y,\Aut)\cong H^2(Y,\X(T)^*\oplus \C)$.
Then $c'_\chi=c_\chi+\langle\chi,c\rangle$.
\end{Lem}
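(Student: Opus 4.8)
The plan is to unwind the Čech-cocycle description of the action of $H^1(Y,\Aut)$ on $\XX$ and track what it does to the line bundle $\Oo_\chi$. Fix an open covering $Y_i$, $i\in I$, such that both $Y_i$ and $\pi^{-1}(Y_i)$ are Stein, and on which $c$ is represented by a $1$-cocycle $(\varphi_{ij})$ with $\varphi_{ij}\in\Aut(Y_{ij})$, while $\Oo_\chi$ is trivialized by semiinvariant generators $f_i\in H^0(Y_i,\Oo_\chi)$; the transition functions of $\Oo_\chi$ are then the holomorphic functions $g_{ij}\in\Oo^\times(Y_{ij})$ with $f_i=g_{ij}f_j$, and $c_\chi=[(g_{ij})]\in H^1(Y,\Oo_Y^\times)\cong H^2(Y,\Z)\subset H^2(Y,\C)$.

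Next I would compute the transition functions of $\Oo'_\chi$ on $X'=X_c$. By construction $X'$ is glued from the pieces $\pi^{-1}(Y_i)$ via $x_i=\varphi_{ij}(x_j)$, so a section of $\Oo'_\chi$ over $Y_i$ is exactly a section of $\Oo_\chi$ over $Y_i$, i.e. the same $f_i$, but now the comparison over $Y_{ij}$ goes through $\varphi_{ij}$: on $X'$ the local generator "$f_i$'' pulled back by the gluing map $\varphi_{ij}$ becomes $\varphi_{ij}^*f_i$. Since $f_i$ is a $\chi$-semiinvariant function on $\pi^{-1}(Y_i)$ and $\varphi_{ij}\in\Aut(Y_{ij})\hookrightarrow T_Y(Y_{ij})$ acts on $\pi^{-1}(Y_{ij})$ by $x\mapsto\varphi_{ij}(\pi(x))\,x$, one gets $\varphi_{ij}^*f_i = \langle\chi,\varphi_{ij}\rangle^{-1} f_i$ (or its inverse, depending on conventions — the sign will be fixed so that the final formula comes out additively). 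Hence the transition function of $\Oo'_\chi$ with respect to the generators $f_i$ is $g_{ij}\cdot\langle\chi,\varphi_{ij}\rangle^{\pm1}$, so that $c'_\chi=[(g_{ij}\langle\chi,\varphi_{ij}\rangle)]=c_\chi+[(\langle\chi,\varphi_{ij}\rangle)]$ in the abelian group $H^1(Y,\Oo_Y^\times)$, where addition corresponds to multiplication of transition functions.

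It then remains to identify $[(\langle\chi,\varphi_{ij}\rangle)]$ with $\langle\chi,c\rangle$. Under the identifications $\Aut\cong\Oo_Y/(\C\oplus\X(T)^*)$ and $H^1(Y,\Aut)\cong H^2(Y,\C\oplus\X(T)^*)$ of Corollary \ref{Cor:2.4}, the homomorphism $\Aut\to\C^\times$, $\varphi\mapsto\langle\chi,\varphi\rangle$, is precisely the one through which $\langle\chi,\cdot\rangle:H^1(Y,\Aut)\to H^2(Y,\C)$ is computed; here the point is that the pairing with a \emph{fixed} character $\chi$ \emph{is} a group homomorphism (unlike $\chi\mapsto[\Oo_\chi]$), so it is compatible with the connecting maps and the boundary in the long exact sequence. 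Chasing the same covering and cocycle $(\varphi_{ij})$ through this homomorphism gives exactly the class $[(\langle\chi,\varphi_{ij}\rangle)]\in H^1(Y,\C^\times)=H^2(Y,\Z)\subset H^2(Y,\C)$, which by definition is $\langle\chi,c\rangle$. Combining the two displays yields $c'_\chi=c_\chi+\langle\chi,c\rangle$.

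The main obstacle I expect is purely bookkeeping: getting the signs and the direction of the various identifications to line up — in particular the sign in $\varphi_{ij}^*f_i=\langle\chi,\varphi_{ij}\rangle^{\mp1}f_i$, the identification of $H^1(Y,\Oo_Y^\times)$'s group law with addition in $H^2(Y,\Z)$, and the precise normalization of the iso $H^1(Y,\Aut)\cong H^2(Y,\C\oplus\X(T)^*)$ coming from the exponential-type sequence $0\to\C\oplus\X(T)^*\to\Oo_Y\to\Aut\to0$. Once a consistent set of conventions is fixed (which can be pinned down by testing on the model manifold $X=T^*(T)$ of Example \ref{Ex:1}, where everything is explicit), the argument is a short diagram chase and no serious analytic input beyond Cartan's theorems and the Steinness of $Y$ is needed.
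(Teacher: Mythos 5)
Your argument is correct and is essentially the paper's own proof: the paper likewise tracks how the gluing cocycle $(\varphi_{ij})$ twists the transition functions of $\Oo_\chi$ into $f_{ij}\langle\chi,\varphi_{ij}\rangle$ (verifying, exactly as you do, that local $\chi$-semiinvariant generators compared through $\varphi_{ij}$ pick up the factor $\langle\chi,\varphi_{ij}\rangle^{-1}$), and then identifies the resulting class with $c_\chi+\langle\chi,c\rangle$ via the construction of the isomorphism $H^1(Y,\Aut)\cong H^2(Y,\C\oplus\X(T)^*)$ and the homomorphism $\varphi\mapsto\langle\chi,\varphi\rangle$. The sign/normalization bookkeeping you defer is treated at the same (brief) level of detail in the paper, so there is no gap beyond what the original proof itself leaves implicit.
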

\begin{proof}
 Let $f_{ij}$ denote the transition functions for the
sheafs $\Oo_\chi$ on $Y$. From the construction of the isomorphism
$H^1(Y,\Aut)\cong H^2(Y,\C\oplus\X(T)^*)$ it follows that we need to
check that $f_{ij}\langle \chi,\varphi_{ij}\rangle$ is a system of
transition functions for $\Oo'_\chi$.

Indeed, let $\Oo'$ be the line bundle on $Y$ with transition
functions $f_{ij}\langle\chi,\varphi_{ij}\rangle$. Let $T$ act on
sections of $\Oo'$ by $\chi$. We may assume that $\Oo'$ is
trivialized over each $Y_i$. Let $\sigma$ be a global section of
$\Oo'$. Our claim will follow if we check that $\sigma$ can be
regarded as a function on $X'$ of weight $\chi$. To verify this we
need to check that $\sigma_i(x)=\sigma_j(\varphi_{ij}^{-1}x)$, where
$\sigma_i$ is the trivialization of $\sigma$. Here the subsets
$\pi^{-1}(Y_{ij})\subset \pi^{-1}(Y_i),\pi^{-1}(Y_j)$ are assumed to
be identified as in $X$. But
$\sigma_j(\varphi_{ij}^{-1}x)=\langle\chi,\varphi_{ij}\rangle^{-1}\sigma_j(x)=\sigma_i(x)$
(the factor $f_{ij}$ does not appear because of the choice of the
identification of two different $\pi^{-1}(Y_{ij})$).
\end{proof}

\begin{Lem}\label{Lem:3.2}
Let $c\in H^2(Y,\C)\hookrightarrow H^2(Y,\C\oplus\X(T)^*)\cong
H^1(Y,\Aut)$. Then $2\pi i c=c_0-c_0'$.
\end{Lem}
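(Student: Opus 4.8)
Here is a plan for proving Lemma~\ref{Lem:3.2}. The idea is to run the construction of $\omega_0$ from the proof of Lemma~\ref{Lem:1.1} for $X$ and for $X'=X_c$ on one common covering, and then to identify the difference $[\omega_0']-[\omega_0]\in H^2_{DR}(Y)\cong H^2(Y,\C)$ as a \v{C}ech cocycle.

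First I would fix a convenient representative of $c$. Since $c\in H^2(Y,\C)$, after refining we may choose a covering $Y=\bigcup_i Y_i$ with all $Y_i$ and $\pi^{-1}(Y_i)$ Stein, holomorphic functions $f_{ij}\in\Oo(Y_{ij})$ whose \v{C}ech coboundary $g_{ijk}:=f_{jk}-f_{ik}+f_{ij}$ is a \emph{constant} $2$-cocycle representing $c$, and such that $\varphi_{ij}:=\exp(2\pi i\,v(f_{ij}))\in\Aut(Y_{ij})$ is a $1$-cocycle whose class under $H^1(Y,\Aut)\cong H^2(Y,\C\oplus\X(T)^*)$ is $c$; this is exactly the description of the connecting homomorphism, using $H^j(Y,\Oo_Y)=0$ for $j>0$ and the fact that $f\mapsto\exp(2\pi i\,v(f))$ is a morphism of sheaves of groups with kernel $\C\oplus\X(T)^*$. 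As in the proof of Lemma~\ref{Lem:1.1}, I also arrange $\omega|_{\pi^{-1}(Y_i)}=-d\alpha_i$ for a $T$-invariant $\alpha_i$ with $\alpha_i(\xi_*)=H_\xi$, obtain $\gamma_{ij}$ with $\pi^*\gamma_{ij}=\alpha_i-\alpha_j$, then $\gamma_i$ with $\gamma_{ij}=\gamma_i-\gamma_j$, and set $\omega_0=d\gamma_i$. Now $X'=X_c$ is reglued from the same pieces $\pi^{-1}(Y_i)$ along the $\varphi_{ij}$; since the embedding $\pi^{-1}(Y_i)\hookrightarrow X'$ is Hamiltonian, $\omega$ on $X'$ still has primitive $\alpha_i$ over $Y_i$.

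The heart of the argument is the local computation of how $\varphi_{ij}$ moves $\alpha_j$. By Lemma~\ref{Lem:2.1}, $\varphi_{ij}$ is the value at time $1$ of the holomorphic flow $g_t$ of $W:=2\pi i\,v(f_{ij})$. Using $d\alpha_j=-\omega$ and $\iota_{v(f)}\omega=df$, Cartan's formula gives $\mathcal{L}_W\alpha_j=2\pi i\,dh_{ij}$ with $h_{ij}:=\alpha_j(v(f_{ij}))-f_{ij}$. Crucially $h_{ij}$ is annihilated by $W$: the term $\alpha_j(v(f_{ij}))$ is the pull-back of a holomorphic function on $Y$, because $v(f_{ij})$, regarded as a section of $\t_Y$, is tangent to the fibres of $\pi$ and pairs with $\alpha_j$ via $\alpha_j(\xi_*)=H_\xi=\pi^*\langle\psi,\xi\rangle$, while $v(f_{ij})(f_{ij})=\omega(v(f_{ij}),v(f_{ij}))=0$. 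Hence $g_t^*\alpha_j$ is linear in $t$ and $\varphi_{ij}^*\alpha_j=\alpha_j+2\pi i\,dh_{ij}$, so also $(\varphi_{ij}^{-1})^*\alpha_j=\alpha_j-2\pi i\,dh_{ij}$. Writing $h_{ij}=\pi^*\bar h_{ij}$, on the $i$-chart of $X'$ over $Y_{ij}$ the closed form $\alpha_i-(\varphi_{ij}^{-1})^*\alpha_j$ equals $\pi^*(\gamma_{ij}+2\pi i\,d\bar h_{ij})$. Because $g_{ijk}$ is constant, linearity of $v$ and additivity of $\langle\psi,\cdot\rangle$ give $\bar h_{ij}+\bar h_{jk}-\bar h_{ik}=-g_{ijk}\in\C$; hence $(d\bar h_{ij})$ is a $1$-cocycle in the coherent sheaf $\Omega^1_Y$ and, by Cartan's Theorem~B, a coboundary $d\bar h_{ij}=\rho_i-\rho_j$ with $\rho_i\in\Omega^1(Y_i)$. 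Running the construction of Lemma~\ref{Lem:1.1} for $X'$ with $\gamma_i':=\gamma_i+2\pi i\,\rho_i$ then yields $\omega_0'=d\gamma_i'=\omega_0+2\pi i\,d\rho_i$, a well-defined global $2$-form on $Y$.

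Finally I would read off the DeRham class of $\eta:=\omega_0'-\omega_0$ by climbing the \v{C}ech--DeRham double complex: $\eta$ has primitive $2\pi i\,\rho_i$ on $Y_i$, the differences $2\pi i(\rho_i-\rho_j)$ have primitive $2\pi i\,\bar h_{ij}$ on $Y_{ij}$, and $2\pi i(\bar h_{ij}+\bar h_{jk}-\bar h_{ik})=-2\pi i\,g_{ijk}$ is a constant \v{C}ech $2$-cocycle. Under $H^2_{DR}(Y)\cong H^2(Y,\C)$ this shows $[\eta]=-2\pi i\,[g_{ijk}]=-2\pi i\,c$, i.e. $2\pi i\,c=c_0-c_0'$, as claimed. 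The main obstacle I anticipate is precisely the local step: checking that $\mathcal{L}_W\alpha_j$ is an exact form whose primitive $h_{ij}$ is killed by $W$ — this is what makes the time-one flow computable, and it rests on $v(f)$ being fibrewise and on the normalization $\alpha_j(\xi_*)=H_\xi$. A secondary point needing care is the sign in the \v{C}ech--DeRham comparison, which is pinned down by the same conventions already fixed for $H^2_{DR}(Y)\cong H^2(Y,\C)$ and for the connecting homomorphism $H^1(Y,\Aut)\cong H^2(Y,\C\oplus\X(T)^*)$.
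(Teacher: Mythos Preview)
Your argument is correct and follows the same overall strategy as the paper: represent $c$ by $\varphi_{ij}=\exp(2\pi i\,v(f_{ij}))$, compare the $\gamma_{ij}$-cocycle for $X'$ with that for $X$, and read off the resulting constant $2$-cocycle $g_{ijk}$. The only real difference is in the local step: the paper passes to the free locus and computes $\alpha_i-\varphi_{ij}^{*-1}\alpha_i$ in explicit coordinates on $T^*(T)$, obtaining $2\pi i\,d(\mathcal{L}f_{ij}-f_{ij})$, while you get the same formula intrinsically via Cartan's formula and the observation that $h_{ij}=\alpha_j(v(f_{ij}))-f_{ij}$ is killed by $W$, making the flow act linearly; your $\bar h_{ij}$ is exactly the paper's $\mathcal{L}f_{ij}-f_{ij}$.
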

\begin{proof}
As in the proof of Lemma \ref{Lem:1.1}, we may assume that the
actions $T:X,T:X'$ are free. There is an $\Oo(Y)$-valued 2-cocycle
$(f_{ij})$ such that $\varphi_{ij}=\exp(2\pi i v(f_{ij}))$. Let
$\alpha_i$ be such as in the proof of Lemma \ref{Lem:1.1}. The
1-cocycle in closed 1-forms constructed for $\omega'$ as in the
proof of Lemma \ref{Lem:1.1} equals
$\alpha_j-\varphi_{ij}^{*-1}(\alpha_i)=(\alpha_j-\alpha_i)+
\alpha_i-\varphi_{ij}^{*-1}(\alpha_i)$. Let us compute
$\alpha_i-\varphi_{ij}^{*-1}\alpha_i$. We may assume that
$\pi^{-1}(Y_i)$ is an open neighborhood in $T^*(T)$. Let $\theta_i,
\beta^i$ be such as in Example \ref{Ex:1}. Then
$\alpha_i=\sum_{i=1}^n \beta^i d(\ln \theta_i)$. The map
$\varphi_{ij}$ equals $(\theta_i,\beta^i)\mapsto (\theta_i\exp(2\pi
i\frac{\partial f_{ij}}{\partial \beta^i}), \beta^i)$. Thus
$$\alpha_i-\varphi_{ij}^{*-1} \alpha_i=2\pi i\sum_{k=1}^n \beta^k
d(\frac{\partial f_{ij}}{\partial \beta^k})=2\pi i d(\sum_{k=1}^n
\beta^k\frac{\partial f_{ij}}{\partial \beta^k}-f_{ij}).$$ So the
1-cocycle in $H^1(Y,\Oo_Y/\C)$ corresponding to $c_0'-c_0$ is given
by $2\pi i(\mathcal{L}f_{ij}-f_{ij})$, where
$\mathcal{L}=\sum_{k=1}^n H_i\frac{\partial}{\partial H_i}$ (by
abusing notation, $\frac{\partial}{\partial H_i}$ stands for the
lifting of the corresponding vector field on $\t^*$ to $Y$). Note
that $\mathcal{L}(f_{ij}+f_{jk}+f_{ki})=0$, for
$f_{ij}+f_{jk}+f_{ki}=const$. Therefore the classes of $c_0-c_0'$
and $2\pi i c$ in $H^2(Y,\C)$ coincide.
\end{proof}

\begin{proof}[Proof of Proposition \ref{Prop:3.3}]
Let us prove that the action of $H^1(Y,\Aut)$ on $\XX$ is
transitive. Let $X,X'\in \XX$ and $\pi,\pi'$ be the corresponding
quotient maps. By Remark \ref{Rem:1.2}, there is an open covering
$Y_i,i\in I,$ of $Y$ such that there are Hamiltonian isomorphisms
$\iota_i:\pi^{-1}(Y_i)\rightarrow\pi'^{-1}(Y_i),i\in I,$ with
$\pi'\circ\iota_i=\pi$. So $\iota_i\circ\iota_j^{-1}$ is a 1-cocycle
in $\Aut$ and $X'=X_c$ for the corresponding cohomology class $c$.
It follows that the action is transitive.

Choose $X\in \XX$ and $(c_i')_{i=0}^n\in H^2(Y,\C\oplus\X(T)^*)$.
Let $\Y_X=(Y,\psi,\D, (c_i)_{i=1}^n, c_0)$. By Lemma \ref{Lem:3.1},
there is $c^1\in H^2(Y,\C\oplus \X(T)^*)$ such that
$\Y_{X_{c^1}}=(Y,\psi,\D,(c_i')_{i=1}^n, c_0'')$ for some $c_0'$ and
any two elements $c^1$ with this property differ by an element of
$H^2(Y,\C)$. Now, by Lemma \ref{Lem:3.2}, there is a unique element
$c^2\in H^2(Y,\C)$ such that $\Y_{X_{c^1c^2}}=(Y,\psi,\D,
(c_i')_{i=1}^n,c_0')$. This completes the proof.
\end{proof}


\section{Existence}\label{SECTION_existence}
The goal of this section is to prove Theorem \ref{Thm:2}. Thanks to
Proposition \ref{Prop:3.3}, it is enough to check that for any
5-tuple $\Y$ of the form $(Y,\psi,\D,(c_i)_{i=1}^n,c_0)$ with {\it
some} $c_i,i=\overline{0,n},$ there is $X$ with $\Y_X=\Y$.

Choose an open covering $Y=\Y_i$ such that $Y_i$ is an open disk
and there is an open saturated subset $X_i$ of some model
Hamiltonian $T$-manifold such that
$\Y_{X_i}=(Y_i,\psi|_{Y_i},\D_i,0,0)$, where $\D_i$ is the set of
components of $D\cap Y_i, D\in \D$. Let $\pi_i:X_i\rightarrow Y_i$
be the quotient map. Choose some isomorphisms
$\iota_{ij}:\pi_j^{-1}(Y_{ij})\rightarrow \pi_i^{-1}(Y_{ij})$ of
Hamiltonian manifolds such that $\pi_i\circ\iota_{ij}=\pi_j$. Such
isomorphisms exist by Theorem \ref{Thm:1.1}. We may assume that
$\iota_{ii}=id, \iota_{ji}=\iota_{ij}^{-1}$. Note, however, that, in
general, $\iota_{ij}\iota_{jk}\iota_{ki}\neq id$. So our task is
to modify these isomorphisms for the cocycle condition to hold.

There is $\zeta\in \X(T)\otimes\R$ general for all $X_i$. Using
Example \ref{Ex:1}, one easily gets that for any $i$ there is a
subvariety $\widetilde{Y}_i\subset X_i^\zeta$ satisfying the
following conditions:
\begin{enumerate}
\item The restriction of $\pi_i$ to $\widetilde{Y}_i$ is an
isomorphism $\widetilde{Y}_i\rightarrow Y_i$.
\item $\widetilde{Y}_i$ is a lagrangian submanifold of $X_i$.
\end{enumerate}

\begin{Prop}\label{Prop:4.1}
For any $i,j$ there is a unique element $\varphi_{ij}\in
\Aut(Y_{ij})$ such that
\begin{equation}\label{eq:4.1}\varphi_{ij}(\widetilde{Y}_i\cap
\pi_i^{-1}(Y_{ij}))=\iota_{ij}(\widetilde{Y}_j\cap
\pi_j^{-1}(Y_{ij})).\end{equation}
\end{Prop}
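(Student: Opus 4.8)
The plan is to work on the $T$-bundle locus, where the sheaf $\Aut$ has a transparent description. Fix $i,j$ and restrict everything over $Y_{ij}$; to ease notation drop the intersections with $\pi^{-1}(Y_{ij})$. Both $\widetilde Y_i$ and $\iota_{ij}(\widetilde Y_j)$ are lagrangian sections of $\pi_i$ over $Y_{ij}$ that lie in $X_i^\zeta$, the latter because $\iota_{ij}$ is a Hamiltonian isomorphism and hence carries $X_j^\zeta$ into $X_i^\zeta$ (the set $X^\zeta$ is intrinsic: it is determined by the moment map and the orbit structure, which $\iota_{ij}$ preserves; here one uses $\zeta$ general for all $X_i$). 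By assertion 4 of Lemma \ref{Lem:1.2}, $\pi_i\colon X_i^\zeta\to Y_i$ is a principal $T$-bundle, so the fibrewise difference of two sections is a well-defined holomorphic map $\Phi_{ij}\colon Y_{ij}\to T$, characterised by $\iota_{ij}(\widetilde Y_j)=\Phi_{ij}\cdot\widetilde Y_i$ inside $X_i^\zeta$; translating by $\Phi_{ij}$ via the natural $T_Y$-action is the only element of $T_Y(Y_{ij})$ carrying $\widetilde Y_i$ to $\iota_{ij}(\widetilde Y_j)$, which already gives uniqueness of $\varphi_{ij}$ once we know it exists in $\Aut$.

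The heart of the matter is to show that the translation $\varphi_{ij}$ by $\Phi_{ij}$ actually lies in the subsheaf $\Aut\subset T_Y$, i.e.\ that it is a \emph{Hamiltonian} automorphism (preserves $\omega$ and $\mu$); it automatically preserves $\pi$. It preserves $\mu$ since $\pi$ is preserved. For $\omega$, recall from the proof of Lemma \ref{Lem:2.2} that, after trivialising $X_i^\zeta$ over $Y_{ij}$ as an open piece of $T^*(T)$, an element $\exp(2\pi i\varphi)$ of $T_Y$ (with $\varphi=(\varphi_1,\dots,\varphi_n)\colon Y_{ij}\to\t$) is symplectic precisely when $\partial\varphi_k/\partial\beta^\ell=\partial\varphi_\ell/\partial\beta^k$ for all $k,\ell$. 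So it suffices to verify this closedness condition for the $1$-form $\sum_k\varphi_k\,d\beta^k$ attached to $\Phi_{ij}$. Here is where I would use that \emph{both} $\widetilde Y_i$ and $\widetilde Y_j$ are lagrangian: the graph description says the section $\iota_{ij}(\widetilde Y_j)$ is lagrangian in $X_i$, and $\widetilde Y_i$ is lagrangian; a standard computation (as in the symplectic slice theorem of \cite{slice}, compare Lemma \ref{Lem:2.2}) identifies the difference of two lagrangian sections of $T^*(T)\to T$ with a \emph{closed} $1$-form on the base, which is exactly the required symmetry of partials. Hence $\varphi_{ij}=\exp(2\pi i\,v(f_{ij}))$ locally for some $f_{ij}\in\Oo$, so $\varphi_{ij}\in\Aut(Y_{ij})$.

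Finally I would record that \eqref{eq:4.1} determines $\varphi_{ij}$ uniquely: if $\varphi,\varphi'\in\Aut(Y_{ij})$ both satisfy \eqref{eq:4.1}, then $\varphi^{-1}\varphi'$ fixes $\widetilde Y_i$ pointwise; but an element of $\Aut(Y_{ij})\subset T_Y(Y_{ij})$ acts on $X_i^\zeta$ by the free $T$-action, so fixing one point in each fibre forces $\varphi^{-1}\varphi'=\id$. I expect the only real obstacle to be the closedness computation in the previous paragraph — making precise, in the $T^*(T)$ model of $X_i^\zeta$, that ``difference of two lagrangian sections $=$ closed $1$-form on the base'' $T$-equivariantly and then identifying that closed form with the $\varphi$ attached to $\Phi_{ij}$; everything else is bookkeeping with the principal $T$-bundle structure of Lemma \ref{Lem:1.2} and the description $\Aut\cong\Oo_Y/(\C\oplus\X(T)^*)$.
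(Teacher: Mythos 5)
Your proposal is correct and follows essentially the same route as the paper: restrict to the free locus $X^\zeta$, take $\varphi_{ij}$ to be the unique $T_Y(Y_{ij})$-translation carrying $\widetilde Y_i$ to $\iota_{ij}(\widetilde Y_j)$, and use that both sections are lagrangian (together with preservation of $\pi$, hence of the $H_\xi$) to conclude it is symplectic and therefore lies in $\Aut(Y_{ij})$. The only cosmetic difference is that you verify symplecticity in $T^*(T)$-coordinates via the symmetry-of-partials criterion from Lemma \ref{Lem:2.2} (the relevant fibration being $\pi\colon X^\zeta\rightarrow Y$ rather than $T^*(T)\rightarrow T$), whereas the paper argues coordinate-freely; your uniqueness argument is the same freeness-of-the-$T$-action observation the paper leaves implicit.
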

\begin{proof}
By Lemma \ref{Lem:1.2},
$\iota_{ij}(\pi_j^{-1}(Y_{ij})^\zeta)=\pi^{-1}_i(Y_{ij})^\zeta$.
Since the group $\Aut(Y_{ij})$ depends only on $Y_{ij},\psi$ and
does not depend on $\D$, we may assume that the actions $T:X_i,X_j$
are free. By (1), there is $\varphi_{ij}\in T_Y(Y_{ij})$ satisfying
(\ref{eq:4.1}). Note that $\varphi_{ij*}\xi_*=\xi_*,
\varphi_{ij}^*H_\xi=H_{\xi}$ for any $\xi$. Therefore for any $x\in
\pi_i^{-1}(Y_{ij}), \eta\in T_x(\pi^{-1}_i(Y_{ij}))$ we have
$$\omega_{\varphi_{ij}(x)}(\xi_*,\varphi_{ij*}\eta)=\partial_{\varphi_{ij*}\eta}H_\xi(\varphi_{ij}x)=
\partial_\eta H_\xi(x)=\omega_x(\xi_*,\eta).$$
 Since both
$\widetilde{Y}_i\cap
\pi_i^{-1}(Y_{ij}),\iota_{ij}(\widetilde{Y}_j\cap
\pi_j^{-1}(Y_{ij}))$ are lagrangian, we easily see that
$\varphi_{ij}$ is a symplectomorphism. Thus $\varphi_{ij}\in
\Aut(Y_{ij})$.
\end{proof}

Set $\widetilde{\iota}_{ij}:=\varphi_{ij}^{-1}\iota_{ij}$. By the
construction of $\varphi_{ij}$, $$\widetilde{Y}_i\cap
\pi_i^{-1}(Y_{ij})=\widetilde{\iota}_{ij}(\widetilde{Y}_j\cap
\pi_j^{-1}(Y_{ij})).$$ This equation and (1) imply the cocycle
condition
$\widetilde{\iota}_{ij}\widetilde{\iota}_{ji}\widetilde{\iota}_{ki}=id$.

Set $X:=\sqcup_{i} X_i/\sim$, where points $x_i\in X_i, x_j\in X_j$
are equivalent if $\pi_i(x_i)=\pi_j(x_j)\in Y_{ij}$ and
$x_i=\widetilde{\iota}_{ij}(x_j)$. The manifold $X$ is Stein, see
Proposition \ref{Prop:1.1}, it has the natural Hamiltonian
structure and the action $T:X$ is MF. By construction,
$\Y_X=(Y,\psi,\D,(c_i)_{i=1}^n,c_0)$ for some $c_i$.

\section{An open problem}\label{SECTION_open}
Here we would like to state an open problem  communicated to us by
F. Knop. Until a further notice $T$ is a compact torus. Recall that
any MF compact Hamiltonian $T$-manifold admits an invariant
K\"{a}hler structure. This follows, for example, from the Delzant
construction involving a Hamiltonian reduction.

For a moment, let $X$ be an arbitrary smooth manifold. Recall that a
{\it hyperk\"{a}hler} structure on $X$ is a quadruple $(q,I,J,K)$,
where $q$ is a Riemannian metric, and $I,J,K$ are complex structures
satisfying the quaternionic relations $IJ=K, JK=I, KI=J$ and such
that the three 2-forms defined by $\omega_A(u,v)=q(Au,v), A=I,J,K,$
are symplectic. The basic example here is $X=\R^{4n}$ considered as
an $n$-dimensional quaternionic vector space $\H^n$. Note that
$\omega_J+i\omega_K$ is a holomorphic symplectic form with respect
to the complex structure $I$.

Now let $X$ be a hyperk\"{a}hler manifold equipped with an action of
$T$ preserving the hyperk\"{a}hler structure. This action is called
{\it hyperhamiltonian} if it is Hamiltonian for all symplectic forms
$\omega_A, A=I,J,K$. Let $\mu_I,\mu_J,\mu_K$ be the corresponding
moment maps. Then one easily checks that $\mu_J+i\mu_K$ is a
holomorphic map with respect to  the complex structure $I$. If one
can lift the action $T:X$ to a holomorphic action $T_\C:X$, where
$T_\C$ denotes the complexification of $T$, then $X$ becomes a
Hamiltonian $T_\C$-manifold.

We say that a hyperhamiltonian action $T:X$ is MF if $\dim
X=4(\dim T-\dim T_0)$, where $T_0$ denotes the inefficiency kernel
for the action $T:X$.

Actually, there is a special class of MF hyperhamiltonian actions
studied extensively in the last ten years, so called {\it
hypertoric manifolds} (or toric hyperk\"{a}hler manifolds), see,
for example, \cite{BD},\cite{HS},\cite{Konno},\cite{Proudfoot}.
Let us give their definition.

There is a reduction procedure for hyperhamiltonian manifolds
introduced in a more general setup in \cite{HKLR}. Choose a
connected subgroup $T_0\subset T$. For $\alpha\in(\t_0^*)^{\oplus
3}, \alpha=(\alpha_1,\alpha_2,\alpha_3)$ we put $X\dquo_\alpha
T:=(\mu_0^{-1}(\alpha))/T$, where
$\mu=(p\circ\mu_I,p\circ\mu_J,p\circ\mu_K), p:\t^*\rightarrow
\t_0^*$ is the natural projection. If $T$ acts freely on
$\mu^{-1}(\alpha)$, then $X\dquo_\alpha T$ is a genuine manifold
of dimension $\dim X-4\dim T$ possessing a natural hyperk\"{a}hler
structure. Moreover, $X$ is a hyperhamiltonian $T/T_0$-manifold.
By a hypertoric manifold one means a reduction of $\H^n$ by a
subtorus in a maximal torus of $\operatorname{Sp}(n)$.

\begin{Prob}
Let $X$ be a MF  Hamiltonian Stein $T_\C$-manifold with symplectic
form $\omega$ and moment map $\mu$. Does there exist a
hyperk\"{a}hler structure $(q,I,J,K)$ on $X$ such that
\begin{enumerate}
\item $I$ coincides with the initial complex structure on $X$.
\item $\omega_J=\Re\omega, \omega_K=\Im\omega$.
\item The action $T:X$ is hyperhamiltonian with $\mu_J=\Re\mu,
\mu_K=\Im\mu$.
\end{enumerate}
\end{Prob}

{\Small Chair of Higher Algebra, Department of Mechanics and
Mathematics, Moscow State University.

\noindent
E-mail address: ivanlosev@yandex.ru}
\end{document}